\newtheorem{theorem}{Theorem}[section]
\newtheorem{lemma}[theorem]{Lemma}
\newtheorem{proposition}[theorem]{Proposition}
\newtheorem{remark}[theorem]{Remark}
\newtheorem{conjecture}[theorem]{Conjecture}
\numberwithin{equation}{section}
\DeclareMathOperator{\vol}{vol}
\def\Pic{\mathrm{Pic}}
\begin{document}

\title{Fano varieties with conjecturally largest Fano index}

\begin{abstract}
For Fano varieties of various singularities such as canonical and terminal, we construct examples with large Fano index. By low-dimensional evidence, we conjecture that our examples have the largest Fano index for all dimensions.
\end{abstract}

\author{Chengxi Wang}
\address{UCLA Mathematics Department,
Box 951555, Los Angeles, CA 90095-1555} \email{chwang@math.ucla.edu}

\maketitle

\section{introduction}

We call a normal projective variety $X$ Fano if the anti-canonical divisor $-K_X$ is ample. Del Pezzo surface is a $2-$dimensional Fano variety. A Fano variety is called $\mathbb{Q}-$Fano if it has only terminal $\mathbb{Q}-$factorial singularities and its Picard number is one. For a Fano variety, the \textit{Fano index} is defined to be:
$$\mathrm{FI}(X):=\max\{m\in \mathbb{Z}_{>0}|-K_X \thicksim_{\mathbb{Q}} mA, \text{ where } A \text{ is a } \text{Weil divisor}\}.$$
It is proved by Prokhorov that for a $\mathbb{Q}-$Fano threefold $X$, the Fano index belongs to $\{1,\ldots,11,13,17,19\}$ \cite[Theorem 1.1]{ProkhorovQ-FanoI}.
And if $\mathrm{FI}(X)=19$, then $X\simeq \mathbb{P}^3(7,5,4,3)$; if $\mathrm{FI}(X)=17$, then $X\simeq \mathbb{P}^3(7,5,3,2)$ \cite[Theorem 1.2.]{ProkhorovQ-FanoII}. 

In this paper, for various singularities such as canonical and terminal, we give 
Fano varieties with the conjecturally largest Fano index. 
Our construction uses weighted projective spaces with weights expressed by {\it Sylvester's sequence}. Sylvester's sequence is defined recursively by $s_0 = 2$ and $s_n = s_{n-1}(s_{n-1}-1) + 1$ for $n \geq 1$.
For Fano varieties with Gorenstein canoncial singularities, 
Nill gave the examples with the conjecturally largest Fano index \cite[Corollary 6.1.]{Nill2007}. We will write Nill's example as Theorem \ref{Gocanonicalindex}.

\begin{theorem}[Theorem \ref{largestindexcan}]
For each integer $n\geq 2$, there is a Fano $n-$fold with canonical singularities that has Fano index $(s_{n-1}-1)(2s_{n-1}-3)$. In particular, this is larger than $2^{2^{n-1}}$.
\end{theorem}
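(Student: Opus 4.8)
The plan is to produce the required $n$-fold as an explicit weighted projective space. Put $M:=(s_{n-1}-1)(2s_{n-1}-3)$, recall $s_{n-1}-1=s_0s_1\cdots s_{n-2}$, and consider
\[
X_n:=\mathbb{P}\Bigl(s_{n-1}-2,\ s_{n-1}-1,\ \tfrac{M}{s_0},\ \tfrac{M}{s_1},\ \dots,\ \tfrac{M}{s_{n-2}}\Bigr),
\]
a weighted projective space with $n+1$ weights, hence an $n$-fold; for $n=2$ this is $\mathbb{P}(1,2,3)$ and for $n=3$ it is $\mathbb{P}(5,6,22,33)$. I would verify in turn that $X_n$ is well-formed, that $\mathrm{FI}(X_n)=M$, and that $X_n$ has canonical singularities, and then read off $M>2^{2^{n-1}}$.

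Two elementary facts drive the argument: the terms of Sylvester's sequence are pairwise coprime, and $s_{n-1}\equiv 1\pmod{s_i}$ for $i\le n-2$ (so $s_{n-1}-2\equiv -1$ and $2s_{n-1}-3\equiv -1\pmod{s_i}$). Hence $s_0,\dots,s_{n-2},2s_{n-1}-3$ are pairwise coprime with product $M$, while $s_{n-1}-2$ is coprime to $M$; it follows at once that the $\gcd$ of any $n$ of the $n+1$ weights is $1$, i.e.\ $X_n$ is well-formed. Then $\mathrm{Cl}(X_n)\cong\mathbb{Z}\langle\mathcal{O}(1)\rangle$ and $-K_{X_n}\sim\mathcal{O}(\sum w_i)$, and the telescoping identity $\sum_{i=0}^{n-2}\tfrac1{s_i}=1-\tfrac1{s_{n-1}-1}$ gives $\sum w_i=(s_{n-1}-2)+(s_{n-1}-1)+M\bigl(1-\tfrac1{s_{n-1}-1}\bigr)=M$ since $\tfrac{M}{s_{n-1}-1}=2s_{n-1}-3$; as $\mathcal{O}(1)$ generates the class group, $\mathrm{FI}(X_n)=M$.

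The crux is canonicity, which I would check on the torus strata via the Reid--Tai age criterion, in two cases. At the fixed point $P_0$ of weight $s_{n-1}-2$, reducing the other weights mod $s_{n-1}-2$ (using $M\equiv s_{n-1}\equiv 1$) gives the local model $\tfrac1{s_{n-1}-2}\bigl(1,\tfrac{s_{n-1}-1}{s_0},\dots,\tfrac{s_{n-1}-1}{s_{n-2}}\bigr)$; these $n$ residues are coprime to $s_{n-1}-2$ and sum to $s_{n-1}-1\equiv 1\pmod{s_{n-1}-2}$, so for $1\le k\le s_{n-1}-3$ the age of $\zeta^k$ is a positive integer translate of $\tfrac{k}{s_{n-1}-2}$ that exceeds $1$ (it cannot equal $\tfrac{k}{s_{n-1}-2}$, as that forces $k\equiv 0$), so $X_n$ is in fact terminal at $P_0$. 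Along any other stratum the generic stabilizer $\mu_d$ has $d\mid M$: if the coordinate of weight $s_{n-1}-2$ survives on the stratum then $d\mid\gcd(s_{n-1}-2,M)=1$; otherwise the stratum lies in $\{x_0=0\}=\mathbb{P}(w_1,\dots,w_n)$, whose weights all divide $M$. Since $\sum w_i=M$ and the weights supported on the stratum are divisible by $d$, the transverse weights sum to a multiple of $d$; a non-negative multiple of $d$ is $0$ or $\ge d$, and it is $0$ only when $\zeta^k$ acts trivially (well-formedness rules out partial cancellation, hence also quasi-reflections), so all nontrivial ages are $\ge 1$ and the stratum is canonical. (For $n=3$ these are the Du Val points $A_1,A_2,A_{10}$ together with the terminal point $\tfrac15(1,2,3)$, which one can see directly.)

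The main obstacle is this stratum analysis: isolating precisely which coprimalities of Sylvester's sequence are needed — the congruences $s_{n-1}\equiv 1\pmod{s_i}$, the pairwise coprimality of $\{s_0,\dots,s_{n-2},2s_{n-1}-3\}$, and $\gcd(s_{n-1}-2,M)=1$ — which is exactly what dictates the anomalous weight $s_{n-1}-2$. The closing estimate is then routine: $s_{n-1}-1\ge 2^{2^{n-2}}$ follows by induction from $s_n-1=s_{n-1}(s_{n-1}-1)\ge(s_{n-1}-1)^2$, and $2s_{n-1}-3>s_{n-1}-1$, so $M>(s_{n-1}-1)^2\ge 2^{2^{n-1}}$.
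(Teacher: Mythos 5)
Your construction is exactly the paper's: the same weighted projective space $\mathbb{P}^n(h/s_0,\ldots,h/s_{n-2},s_{n-1}-1,s_{n-1}-2)$ with $h=(s_{n-1}-1)(2s_{n-1}-3)$, the same well-formedness and anticanonical-sum computation, and a Reid--Tai singularity verification resting on the same arithmetic facts (the weights complementary to a stratum sum to a multiple of the local order, and one of them is coprime to it), which the paper packages via Remark \ref{open} and the subset-of-weights lemma (Lemma 2.11 of \cite{ETW21volume}, cf.\ Lemma \ref{checkterminal}); so the proposal is correct and takes essentially the same route. The only blemish is the parenthetical ``$M\equiv s_{n-1}\equiv 1$'' modulo $s_{n-1}-2$ (in fact $s_{n-1}\equiv 2$, while $s_{n-1}-1\equiv 2s_{n-1}-3\equiv M\equiv 1$), which does not affect the local model you state or the rest of the argument.
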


When $n=2$, we prove the Fano variety $\mathbb{P}(1,2,3)$ given in Theorem \ref{largestindexcan} achieves the largest Fano index $6$ among all del Pezzo surfaces with canonical singularities (Proposition \ref{cdelpezzo6}). The example constructed in Theorem \ref{largestindexcan} should have the largest possible Fano index among all canonical Fano varieties of dimension $n$ (Conjecture \ref{conjectureFindexcanonical}).

\begin{theorem}[Theorem \ref{largestindex}]
For each integer $n\geq 3$, there is a Fano $n-$fold with terminal singularities that has Fano index $\frac{1}{2}(s_{n-1}-1)^2-1$. In particular, the Fano index is larger than $2^{2^{n-1}}$.
\end{theorem}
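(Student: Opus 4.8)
The plan is to take $X$ to be an explicit weighted projective $n$-fold $\mathbb{P}(a_0,\dots,a_n)$ whose weights are assembled from Sylvester's sequence; for $n=3$ this must be $\mathbb{P}(2,3,5,7)=\mathbb{P}(7,5,3,2)$ by \cite[Theorem 1.2]{ProkhorovQ-FanoII}, and the general weights are a Sylvester-type analogue of these. First I would check that the chosen weighted projective space is well-formed, i.e.\ that any $n$ of the $n+1$ weights have greatest common divisor $1$ — a finite computation resting on the pairwise coprimality of the terms of Sylvester's sequence (which follows at once from $s_i-1=s_0s_1\cdots s_{i-1}$). Well-formedness yields $\mathrm{Cl}(X)\cong\mathbb{Z}$ with ample generator $\mathcal{O}(1)$ and Picard number one, and $-K_X\sim_{\mathbb{Q}}\big(\sum_i a_i\big)\mathcal{O}(1)$, so $\mathrm{FI}(X)=\sum_i a_i$. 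It then remains to evaluate this sum of weights as $\tfrac12(s_{n-1}-1)^2-1$, which I would obtain from the telescoping identity $\sum_{i=0}^{k-1}\tfrac1{s_i}=1-\tfrac1{s_k-1}$ (itself a consequence of $\tfrac1{s_k}=\tfrac1{s_k-1}-\tfrac1{s_{k+1}-1}$) — precisely the mechanism behind the doubly-exponential size of the index.

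The heart of the proof is that $X$ has only terminal singularities. As a weighted projective space, its singular locus is a union of coordinate strata $L_J=\langle P_j:j\in J\rangle$, with $L_J$ singular exactly when $d_J:=\gcd(a_j:j\in J)>1$; at each such stratum one applies the Reid--Tai criterion to the transverse cyclic quotient singularity, i.e.\ one must verify $\sum_\ell\{k b_\ell/r\}>1$ for all $k=1,\dots,r-1$, where $r=d_J$, the $b_\ell\bmod r$ are the transverse weights, and $\{x\}$ is the fractional part. At the positive-dimensional strata $r$ is a proper divisor assembled from the $s_i$, so those checks are comparatively mild; the real difficulty is at the coordinate vertices, especially those whose weight is of size roughly $(\prod_{i\le n-2}s_i)^2$ while the transverse weights there are much smaller. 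For such a vertex one has to rule out, uniformly in $k$, that all residues $kb_\ell\bmod r$ are simultaneously small; I expect this to be done by recording explicit congruences for the weights modulo each $a_i$ (so that some transverse weights are $\equiv\pm(\text{small})\pmod{a_i}$ and contribute a definite amount to every Reid--Tai sum) together with the coprimality of the $s_i$ to rule out resonances among the rest. Controlling this uniformly in both $k$ and $n$ is the step I expect to be the main obstacle, and presumably the exact shape of the weights is forced by exactly this requirement.

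The concluding inequality is elementary: from $s_{n-1}-1=\prod_{i=0}^{n-2}s_i$ and $s_{k+1}=\prod_{i=0}^{k}s_i+1$ one gets by induction $\prod_{i=0}^{k}s_i\ge\tfrac32\cdot2^{2^k}$ for $k\ge1$ (base case $6=\tfrac32\cdot4$, inductive step $\prod_{i=0}^{k+1}s_i>(\prod_{i=0}^{k}s_i)^2\ge\tfrac94\cdot2^{2^{k+1}}$), whence $\tfrac12(s_{n-1}-1)^2-1\ge\tfrac98\cdot2^{2^{n-1}}-1>2^{2^{n-1}}$ for all $n\ge3$.
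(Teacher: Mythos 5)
Your outline has the right general shape (a well-formed weighted projective space with Sylvester-type weights, Fano index equal to the sum of the weights, Reid--Tai at the coordinate points, and an elementary doubly-exponential bound), but as written it is a plan rather than a proof, and the missing piece is exactly the content of the theorem. You never specify the weights for general $n$: saying they are ``a Sylvester-type analogue'' of $(7,5,3,2)$ does not determine them, and the actual weights in the paper's construction are not the naive guess $h/s_i$ --- they are
$a_0=\tfrac12(s_{n-1}-1)-1$, $a_1=\tfrac12(s_{n-1}-1)$, $a_i=\tfrac{(s_{n-1}-1)(s_{n-1}-2)}{2\,s_{n-i}}$ for $2\le i\le n-1$, and $a_n=\tfrac12\bigl(\tfrac12(s_{n-1}-1)(s_{n-1}-2)-1\bigr)$,
which involve the extra factor $s_{n-1}-2$ and two halvings whose integrality must itself be checked. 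Without the explicit weights you cannot verify well-formedness, cannot evaluate $\sum_i a_i=\tfrac12(s_{n-1}-1)^2-1$, and, most importantly, cannot carry out the terminality check. You yourself flag that the uniform-in-$k$ Reid--Tai estimate at the large-weight vertices is ``the main obstacle'' you expect; acknowledging an unresolved obstacle is not the same as closing it, so the heart of the argument is absent.

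For comparison, the paper avoids any uniform estimate on residues by Lemma \ref{checkterminal}: if among the transverse weights there is a subset whose sum is an exact multiple of $r=a_i$ (with the gcd of that subset together with $r$ equal to $1$), plus one further weight coprime to $r$, then every Reid--Tai sum automatically exceeds $r$. The proof of Theorem \ref{largestindex} then consists of exhibiting such subset-sum identities at each vertex, e.g.\ $a_{n-1}+\cdots+a_2+a_0=a_n$, $\;a_n+\sum_{1\le j\le n-1,\,j\ne i}a_j=(s_{n-i}-1)a_i$ for $2\le i\le n-1$, and $(a_{n-1}+\cdots+a_2)+a_n=(s_{n-1}-3)a_1=(s_{n-1}-1)a_0$, all of which follow from the telescoping identity for $\sum 1/s_i$ and visibly force the particular shape of the weights. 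If you want to complete your argument along your own lines you would either have to reproduce these identities (at which point you are doing the paper's proof) or genuinely establish the residue estimates you deferred, which is much harder than you suggest. Your final inequality $\tfrac12(s_{n-1}-1)^2-1>2^{2^{n-1}}$ is correct and fine as stated.
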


This example should be the largest possible Fano index among such Fano varieties (Conjecture \ref{conjectureFindexterminal}).

Let $X$ be a Fano variety of dimension $n$.
The \textit{volume} of $X$ is defined to be
$$\mathrm{vol}(X) := \lim_{\ell \rightarrow \infty} h^0(X,-\ell K_X)/(\ell^n/n!)$$
which measures the asymptotic growth of the anti-plurigenera $h^0(X,-\ell K_X)$.
Moreover, the volume $\vol(X)$ equals the intersection number $(-K_X)^n.$ Varieties of general type, Calabi-Yau varieties and Fano varities with various singularities and very small volume are studied in
\cite{TW21,ETW21volume,T22}.
Balletti,  Kasprzyk, and Nill prove the weighted projective space $$\mathbb{P}^n(1,1,2(s_n-1)/s_{n-1},\ldots,2(s_n-1)/s_1)$$ has the largest volume $2(s_n-1)^2$ among all $n-$dimensional canonical toric Fano varieties for $n\geq 4$ \cite[Corollary 1.3]{BKN}. 
Kasprzyk shows that for $n\geq 2$, the weighted projective space 
$$X=\mathbb{P}^n(1,1,(s_{n-1}-1)/s_{n-2},\ldots,(s_{n-1}-1)/s_0)$$ is terminal and has very large volume $(-K_X)^n=\frac{s_{n-1}^n}{(s_{n-1}-1)^{n-2}}$ \cite[Lemma 3.7.]{Kasprzyk2013}. In particualr, it is conjectured to
have the largest possible volume among the
terminal Fano varieties of dimension $n$.

For $n-$dimensional $\mathbb{Q}-$factorial Gorensein toric Fano variety $X$ with Picard number one, Nill gives the Fano varieties with largest $(-K_X)^n$ \cite[Theorem A']{Nill2007}: if $n=2$, then $(-K_X)^n\leq 9$ with equality iff $X\cong \mathbb{P}^2$; if $n=3$, then $(-K_X)^n\leq 72$ with equiality iff $X\cong\mathbb{P}^3(3,1,1,1)$ or $X\cong \mathbb{P}^3(6,4,1,1)$; if $n\geq 4$, then $(-K_X)^n\leq 2(s_{n-1}-1)^2$ with equality iff $$X\cong \mathbb{P}(2(s_{n-1}-1)/s_0,\ldots,2(s_{n-1}-1)/s_{n-2},1,1).$$
Nill conjecuture that this theorem holds for Gorenstein canonical Fano varieties \cite[Conjecture 2.1.]{Nill2007}. Prokhorov proves the conjecture in dimension $3$ \cite[Theorem 1.5]{Prokhorov2005}. That is to say, for all Fano threefold $X$ with Gorenstein canonical singularities, the degree $(-K_X)^n\leq 72$ with equiality iff $X\cong\mathbb{P}^3(3,1,1,1)$ or $X\cong \mathbb{P}^3(6,4,1,1).$

In Section \ref{lvo}, we give Gorenstein terminal Fano varieties with conjecturally largest volume (Theorem \ref{largevolume}).

The existence of K$\mathrm{\ddot{a}}$hler-Einstein (KE) metrics on Fano varieties has attracted considerable interest. Some recent progress uses the ideas in higher dimension geometry, especially the methods from the minimal model program (MMP) (see survey \cite{XuSurvey2021}.
A lot of results are known about whether hypersurfaces in weighted projective space admit KE metrics \cite{JK01,JK2001,LiuPeracci}, \cite[Table 7]{KimOkadaWon}.
For del Pezzo surfaces which are quasi-smooth hypersurfaces in weighted
projective 3-spaces, 
Johnson and J.~Koll\'ar investigate the existence of a KE metric on many of these with Fano index one \cite{JK01}. Hwang and Yoon give some combinatorial ways to check whether a toric Fano variety or a fake weighted projective space has a KE metric \cite{HuwangYoon}. They show the projective space is the only weighted projective space that admits a KE metric \cite[Corollary 3.8]{HuwangYoon}. This implies the Fano varieties constructed in this paper do not admit KE metric. Fujita shows that for a $\mathbb{Q}-$Fano variety $X$ of dimension $n$ admitting KE metrics, the volume is bounded: 
$\vol(X)\leq \vol(\mathbb{P}^n)=(n+1)^n$ with equality iff $X\cong\mathbb{P}^n$ \cite[Theorem 1.1.]{Fuj15}. However, the volumes of the examples in this paper increase and exceed the bound $(n+1)^n$ very quickly as $n$ increases.

{\it Acknowledgements.} I would like to thank Burt Totaro for helpful conversations
and suggestions.

\section{Preliminaries}

For a collection of positive integers $a_0,\ldots,a_N$, we define the weighted projective space $X = \mathbb{P}^N(a_0,\ldots,a_N)$ to be the quotient variety
$(\mathbb{A}^{N+1}\setminus 0)/\mathbb{G}_m$, where the multiplicative group $\mathbb{G}_m$ acts by $t(x_0,\ldots,x_N) = (t^{a_0}x_0,\ldots,t^{a_N}x_N)$. the weighted projective space $X$ is called {\it well-formed} when the analogous
quotient stack $[(A^{n+1}-0)/\mathbb{G}_m]$
has trivial stabilizer group
in codimension 1. Equivalently, we say $X$ is {\it well-formed} if $\gcd(a_0,\ldots,\widehat{a_i},\ldots,a_n) = 1$ for each $i$
\cite[Definition 6.9]{Iano-Fletcher}.
From now on, we only consider well-formed weighted projective
spaces. 
For every integer $c$, we denote $\mathcal{O}(c)$ to be the sheaf
associated to a Weil divisor on $X$.
The sheaf $\mathcal{O}(c)$ is a line bundle if and only if every weight $a_i$ is a factor of $c$.
The canonical divisor of a well-form $X$
is given by $K_X=\mathcal{O}(-a_0-\cdots-a_N)$. A Weil divisor is defined to be ample if some positive multiple of it
is an ample Cartier divisor. The ample Weil divisor $\mathcal{O}(1)$ has volume $\frac{1}{a_0\cdots a_N}$.

We distinguish two notations. For integers $a, b$ and $r>0$, the notation $a \bmod r$ means an integer in $\{0,\cdots,r-1\}$. The notation $a \equiv b \ ( \bmod \ r)$ means $a$ and $b$ are same in the ring $\mathbb Z/r\mathbb Z$. For example, we have $4 \bmod 3$ is not equal to $7$, even though $4 \equiv 7 \ ( \bmod \ 3)$. 

The singularities of weighted projective spaces are all cyclic quotient singularities. Reid-Tai criterion \cite[Theorem 4.11]{Reidyoung} is a method to determine whether these are canonical or terminal. 

\begin{theorem} \label{RT}
For an integer $r>0$, let $\mu_r$ be the group of $r$th roots of unity with an action on affine space $\mathbb{A}^s$ by $\zeta(t_1,\ldots,t_s) = (\zeta^{b_1}t_1,\ldots,\zeta^{b_s}t_s)$.  We say quotient $\mathbb{A}^s/\mu_r$ to be a cyclic quotient singularity of type $\frac{1}{r}(b_1,\ldots,b_s)$.  Assume that 
$\gcd(r,b_1,\ldots,\widehat{b_i},\ldots,b_s) = 1$ for all $i = 1,\ldots,s$, which means the description is well-formed. 
Then the quotient singularity is canonical (resp.\ terminal) if and only if
$$\sum_{k = 1}^s tb_k \bmod r \geq r $$
(resp.\ $> r$) for all $t = 1,\ldots,r-1$.
\end{theorem}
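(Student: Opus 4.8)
\emph{Proof plan.} The plan is to translate the statement into toric geometry and apply the combinatorial criterion for canonical and terminal singularities of toric varieties. First I would realize $\mathbb{A}^s/\mu_r$ as the affine toric variety $U_\sigma$ attached to the cone $\sigma = \operatorname{Cone}(e_1,\dots,e_s)$ (the first orthant) inside the lattice $N := \mathbb{Z}^s + \mathbb{Z}\cdot v$ with $v := \tfrac1r(b_1,\dots,b_s)$: here $\mathbb{Z}^s \subset N$ has index $r$, the dual of $N/\mathbb{Z}^s \cong \mathbb{Z}/r$ records the weights $b_i$, and $U_\sigma \cong \mathbb{A}^s/\mu_r$ carries exactly the singularity of type $\tfrac1r(b_1,\dots,b_s)$. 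The point that must be pinned down at this stage is that the well-formedness hypothesis $\gcd(r,b_1,\dots,\widehat{b_i},\dots,b_s)=1$ is precisely what forces each $e_i$ to be a \emph{primitive} vector of $N$, hence a genuine ray generator of $\sigma$: a short divisibility argument shows that a lattice point $\tfrac tr(b_1,\dots,b_s)+(n_1,\dots,n_s)$ lying on the axis $\mathbb{R}_{\ge0}e_j$ forces $r \mid t$, so the minimal such point is $e_j$ itself.

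Next I would invoke the standard toric discrepancy criterion. For a simplicial affine toric variety $U_\sigma$ with primitive ray generators $u_1,\dots,u_d$ there is a unique $m_\sigma \in M_{\mathbb{Q}}$ with $\langle m_\sigma,u_i\rangle = 1$ for all $i$; every exceptional divisor over $U_\sigma$ obtained by refining $\sigma$ corresponds to a primitive lattice point $u \in \sigma$ and has discrepancy $\langle m_\sigma,u\rangle - 1$. Consequently $U_\sigma$ is canonical if and only if $\langle m_\sigma,u\rangle \ge 1$ for every nonzero $u \in \sigma\cap N$, and terminal if and only if $\langle m_\sigma,u\rangle > 1$ for every such $u$ other than the $u_i$. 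This is exactly \cite[Theorem 4.11]{Reidyoung}; a self-contained account follows from building a toric resolution of $U_\sigma$ by iterated star subdivisions and computing discrepancies, using that the log discrepancy along $D_u$ equals $\langle m_\sigma,u\rangle$.

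It then remains to evaluate these conditions in our case. Here $u_i = e_i$, so $m_\sigma = (1,\dots,1)$ and $\langle m_\sigma,u\rangle$ is just the coordinate sum of $u$. Writing a lattice point of $\sigma$ as $u = \tfrac1r(tb_1,\dots,tb_s)+(n_1,\dots,n_s)$ with $t \in \{0,\dots,r-1\}$ and every coordinate $\ge 0$, the coordinate sum is minimized, for fixed $t$, by taking the $i$-th coordinate equal to $tb_i \bmod r$, with minimum value $\tfrac1r\sum_{k=1}^s (tb_k \bmod r)$. For $t=0$ the only nonzero lattice points of $\sigma$ with coordinate sum $\le 1$ are the $e_i$ themselves, so $t=0$ imposes no condition, while for $t \in \{1,\dots,r-1\}$ the divisibility fact from the first step shows no such lattice point equals an $e_i$ or $0$. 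Hence $U_\sigma$ is canonical iff $\tfrac1r\sum_{k=1}^s(tb_k \bmod r) \ge 1$ for all $t = 1,\dots,r-1$, and terminal iff the inequality is strict; clearing denominators gives the stated inequalities $\sum_{k=1}^s (tb_k \bmod r) \ge r$ (resp.\ $> r$).

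The only genuinely substantial input is the toric discrepancy criterion of the second paragraph; the first and third paragraphs are bookkeeping, with the one subtlety being the role of well-formedness in guaranteeing that the $e_i$ are primitive (without this, the extracted inequalities would be the wrong ones). If one is unwilling to quote the toric criterion, the main obstacle shifts there: producing an explicit toric resolution and tracking discrepancies along all the exceptional divisors is where the real work lies.
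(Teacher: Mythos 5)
The paper does not prove this statement: Theorem \ref{RT} is quoted verbatim as the Reid--Tai criterion from \cite[Theorem 4.11]{Reidyoung}, so there is no in-paper argument to compare against. Your proposal is correct and is essentially the standard toric proof of that criterion (and in substance the one in the cited reference): the lattice $N=\mathbb{Z}^s+\mathbb{Z}\cdot\frac1r(b_1,\dots,b_s)$ with the first orthant, the observation that well-formedness makes the $e_i$ primitive so that $m_\sigma=(1,\dots,1)$, and the minimization of the coordinate sum at $\frac1r\sum_k(tb_k\bmod r)$ for each residue $t$ are all carried out correctly, and you rightly flag that the only substantial input is the toric discrepancy criterion together with the fact that canonicity/terminality can be tested on a single toric resolution.
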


\begin{remark}\label{open}
With an action of the torus $T=(\mathbb{G}_m)^{N+1}/\mathbb{G}_m\cong (\mathbb{G}_m)^N$
by scaling the variables, 
the weighted projective space $X=\mathbb{P}^N(a_0,\ldots,a_N)$ is a toric variety. The locus where $X$ is canonical (or terminal) is open and 
$T$-invariant. Thus if $X$ is canonical (or terminal) at a
point $q$, then $X$ is also canonical (or terminal) at all points $p$ such that $q$ is in the closure of the $T$-orbit of $p$. \cite[Lemma 2.2]{ETW21volume}. Therefore, in order to show $X$ is canonical (or terminal), it is enough to check that each coordinate point $[0:\cdots:0:1:0:\cdots:0]$ is canonical (or terminal).
\end{remark}

In this paper, we construct weighted projective spaces with large Fano indexes or large volumes using Sylvester's sequence. The sequence is given by $s_0=2$
and $s_n=s_{n-1}(s_{n-1}-1)+1$ for $n\geq 1$.
The first few numbers are $2$, $3$, $7$, $43$, $1807$.  We have $s_n=s_0\cdots s_{n-1}+1$, and hence the numbers
in the sequence are pairwise coprime. A crucial property is that the sum of the reciprocals
tends very quickly to 1, i.e.,
$$\frac{1}{s_0}+\frac{1}{s_1}+\cdots+\frac{1}{s_{n-1}}=1-\frac{1}{s_n-1}.$$
Also we have $s_n>2^{2^{n-1}}$
for all $n$, so the numbers grows doubly exponential with respespect to $n$.

\section{Large Fano index}\label{Lfanoindex}
In this section, we consider Fano varieties that are canonical, terminal or Gorenstein canonical and give examples with conjecturally largest Fano indexes.

Lemma 2.11 in \cite{ETW21volume} gives a trick of using certain subsets of weights to check that the singularity is canonical. With the similar argument, we give a trick to check that the singularity is terminal.
\begin{lemma}
\label{checkterminal}
Let $\frac{1}{r}(b_1,\ldots,b_s)$ be a well-formed quotient singularity. If there is some subset $I\subset\{1,\ldots,s\}$ such that $ \sum_{k\in I}b_k$ is a multiple of $r$, $\gcd(\{b_k| k\in I\}\cup \{r\}) = 1$ and $\gcd(b_i, r) = 1$ for some $i\in \{1,\ldots,s\} \setminus I$.  Then the singularity is terminal.
\end{lemma}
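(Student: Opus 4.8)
The plan is to verify the Reid--Tai criterion for terminality (Theorem~\ref{RT}) directly, using the hypotheses to control the sum $\sum_{k=1}^s tb_k \bmod r$ for each $t \in \{1,\ldots,r-1\}$. First I would fix such a $t$ and split the sum over $\{1,\ldots,s\}$ into the part indexed by $I$ and the part indexed by its complement. For the complement, the key observation is that the single index $i$ with $\gcd(b_i,r)=1$ already contributes $tb_i \bmod r$, and since $\gcd(b_i,r)=1$ and $1\le t\le r-1$, this residue $tb_i\bmod r$ is a nonzero element of $\{1,\ldots,r-1\}$; in particular it is $\ge 1$. The remaining indices in the complement contribute nonnegative residues, so the complement's total contribution is at least $tb_i\bmod r \ge 1$.

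The heart of the argument is the part indexed by $I$. Here I would use that $\sum_{k\in I}b_k \equiv 0 \ (\bmod\ r)$ together with $\gcd(\{b_k : k\in I\}\cup\{r\})=1$. The congruence means $\sum_{k\in I}(tb_k \bmod r) \equiv 0 \ (\bmod\ r)$, so this sum of residues is a nonnegative multiple of $r$; I want to rule out the possibility that it is exactly $0$. It vanishes precisely when $tb_k \equiv 0 \ (\bmod\ r)$ for every $k\in I$, i.e.\ when $r \mid tb_k$ for all $k\in I$. Writing $d=\gcd(t,r)$ and $r=d r'$, the condition $r\mid tb_k$ forces $r' \mid b_k$ for every $k\in I$; combined with $r'\mid r$ this gives $r' \mid \gcd(\{b_k:k\in I\}\cup\{r\})=1$, hence $r'=1$, i.e.\ $d=r$, contradicting $1\le t\le r-1$. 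Therefore $\sum_{k\in I}(tb_k\bmod r) \ge r$.

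Combining the two parts, $\sum_{k=1}^s tb_k\bmod r \ge r + (tb_i\bmod r) \ge r+1 > r$ for all $t=1,\ldots,r-1$, which is exactly the terminality condition in Theorem~\ref{RT}. The main obstacle, and the only place requiring care, is the elementary number-theoretic step showing that the $I$-sum cannot collapse to zero: one must handle the interaction between $\gcd(t,r)$ and the divisibility hypotheses correctly, and it is worth noting that we only use $\gcd(b_i,r)=1$ to guarantee a strictly positive leftover contribution from outside $I$ (a multiple of $r$ from the $I$-part alone would only give the non-strict inequality needed for canonical, which matches the analogue in \cite[Lemma~2.11]{ETW21volume}).
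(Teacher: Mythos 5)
Your proposal is correct and follows essentially the same route as the paper's proof: apply Reid--Tai, show the residue sum over $I$ is a nonzero multiple of $r$ (hence at least $r$) using $\gcd(\{b_k : k\in I\}\cup\{r\})=1$, and add the strictly positive contribution $tb_i \bmod r$ from outside $I$. Your treatment of the vanishing case via $d=\gcd(t,r)$ and $r'=r/d$ is just a more explicit version of the paper's coprimality argument.
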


\begin{proof}
Since the singularity is well-formed, we may apply the Reid-Tai criterion.  Let $1 \leq t \leq r-1$ be an integer and consider
$$\sum_{k=1}^s tb_k  \bmod r = \sum_{k \in I} tb_k  \bmod r + \sum_{k \notin I}tb_k  \bmod r.$$
The first sum on the right-hand side is positive. Otherwise, the number $tb_k  \bmod r$ is zero for each $k\in I$, hence is a multiple of $r$. Since $1 \leq t \leq r-1$, this implies all $b_k$, $k\in I$ share a common factor with $r$, which contradicts to $\gcd(\{b_k| k\in I\}\cup \{r\}) = 1$. 
Also, the first sum on the right-hand side is a multiple of $r$ because $\sum_{k \in I} b_k$ is a multiple of $r$. Therefore, the first sum on the right-hand side is at least $r$.  
Moreover, since $\gcd(b_i, r) = 1$ and $1 \leq t \leq r-1$, the integer $tb_i$ is not a multiple of $r$. Thus $tb_i \bmod r$ is at least $1$. Therefore, the sum $\sum_{k=1}^s tb_k  \bmod r$ is larger than $r$.
\end{proof}

\begin{theorem}\label{largestindexcan}
For each integer $n\geq 2$, let $h=(s_{n-1}-1)(2s_{n-1}-3)$, $a_i=h/s_{n-i}$ for $2\leq i\leq n$, $a_1=s_{n-1}-1$ and $a_0=s_{n-1}-2$. Then the weighted projective space $X=\mathbb{P}^n(a_n,\ldots,a_0)=\mathbb{P}^n(h/s_0,\ldots,h/s_{n-2},s_{n-1}-1,s_{n-1}-2)$ is well-formed with canonical singularities and with Fano index $h$.
\end{theorem}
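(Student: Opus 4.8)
\emph{Proof plan.} The plan is to verify the three assertions --- well-formedness, $\mathrm{FI}(X)=h$, and canonicity --- after first recording the elementary arithmetic of Sylvester's sequence on which everything rests. The governing facts are $s_{n-1}-1=s_0s_1\cdots s_{n-2}$ (so $s_j\mid s_{n-1}-1\mid h$ for $0\le j\le n-2$, whence the $a_i=h/s_{n-i}$ are positive integers, explicitly $a_i=\bigl((s_{n-1}-1)/s_{n-i}\bigr)(2s_{n-1}-3)$), the factorization $h=(s_{n-1}-1)(2s_{n-1}-3)$, and the reciprocal identity $\sum_{j=0}^{n-2}1/s_j=1-1/(s_{n-1}-1)$. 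From $s_{n-1}\equiv 1\pmod{s_j}$ one reads off $2s_{n-1}-3\equiv -1\pmod{s_j}$ and $s_{n-1}-2\equiv -1\pmod{s_j}$, so both are coprime to every $s_j$; also $\gcd(s_{n-1}-2,s_{n-1}-1)=1$, and since $2s_{n-1}-3=2(s_{n-1}-2)+1$ we get $\gcd(s_{n-1}-2,2s_{n-1}-3)=1$. In particular $a_0=s_{n-1}-2$ is coprime to each other weight. The anticanonical degree is then
\[
\textstyle\sum_{i=0}^{n}a_i=(s_{n-1}-2)+(s_{n-1}-1)+h\sum_{j=0}^{n-2}\tfrac1{s_j}=(2s_{n-1}-3)+h-\tfrac{h}{s_{n-1}-1}=h,
\]
using $h/(s_{n-1}-1)=2s_{n-1}-3$; hence $-K_X=\mathcal{O}(h)$, and $X$ is Fano because $\mathcal{O}(1)$ is ample.

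For well-formedness I must check that deleting any one weight leaves a coprime $n$-tuple. Deleting any $a_i$ with $i\ge 2$ leaves the consecutive integers $a_0,a_1$, so the gcd is $1$; deleting $a_1$ leaves $a_0$, which is coprime to everything by the first paragraph; deleting $a_0$ leaves $a_1=\prod_{j\le n-2}s_j$ together with the $a_i$ ($i\ge 2$), and a prime dividing $a_1$ divides a unique $s_m$ with $m\le n-2$, hence divides neither $(s_{n-1}-1)/s_m$ nor $2s_{n-1}-3$, hence not $a_{\,n-m}$. So $X$ is well-formed; then $\mathrm{Cl}(X)\cong\mathbb{Z}$ is generated by $\mathcal{O}(1)$, and $-K_X=\mathcal{O}(h)=h\cdot\mathcal{O}(1)$ gives $\mathrm{FI}(X)=h$, since for a Weil divisor $A=c\,\mathcal{O}(1)$ the relation $-K_X\sim_{\mathbb{Q}}mA$ forces $mc=h$, so $m\mid h$ and $m=h$ is attained at $c=1$.

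For canonicity I would invoke Remark~\ref{open} to reduce to the $n+1$ coordinate points, each a well-formed cyclic quotient singularity of type $\frac1{a_i}(a_0,\dots,\widehat{a_i},\dots,a_n)$, and then apply the canonical version of the subset trick (\cite[Lemma 2.11]{ETW21volume}, i.e.\ the statement of Lemma~\ref{checkterminal} with its last hypothesis deleted, as established by the first part of that proof): at the point with weight $a_i$ it suffices to produce $I\subset\{0,\dots,n\}\setminus\{i\}$ with $\sum_{k\in I}a_k\equiv 0\pmod{a_i}$ and $\gcd\bigl(\{a_k:k\in I\}\cup\{a_i\}\bigr)=1$. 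The choices are forced by $h=s_{n-i}a_i$ and $\sum_k a_k=h$. For the point $a_i$ with $2\le i\le n$, take $I$ to be all the remaining weights, so $\sum_{k\ne i}a_k=h-a_i=(s_{n-i}-1)a_i$, the gcd being $1$ since $a_0,a_1$ are coprime. For the point $a_1$, take $I=\{0,2,3,\dots,n\}$, so $\sum_{k\ne 1}a_k=h-a_1=2(s_{n-1}-1)(s_{n-1}-2)=2a_0a_1$, again with gcd $1$ via $a_0,a_1$. The one point needing care is $a_0=s_{n-1}-2$, where $h\equiv 1\pmod{a_0}$ so ``all remaining weights'' fails; here I would use $a_1=s_{n-1}-1\equiv 1\pmod{a_0}$ to peel off $a_1$, taking $I=\{2,3,\dots,n\}$, so $\sum_{k=2}^{n}a_k=h-a_0-a_1=(2s_{n-1}-3)(s_{n-1}-2)=(2s_{n-1}-3)a_0$, and the gcd condition holds already because $\gcd(a_n,a_0)=\gcd(h/s_0,\,s_{n-1}-2)=1$. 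This yields canonicity at every coordinate point; finally $h>2^{2^{n-1}}$ follows from $s_{n-1}>2^{2^{n-2}}$, giving $s_{n-1}-1\ge 2^{2^{n-2}}$ and $2s_{n-1}-3>2^{2^{n-2}}$.

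The main obstacle is organizational rather than conceptual: all the content is in choosing, for each of the $n+1$ coordinate points, a subset $I$ whose weight-sum is visibly a multiple of $a_i$, and the coordinate point $a_0=s_{n-1}-2$ is the genuinely delicate one, since unlike the others one cannot take every remaining weight and must exploit $a_1\equiv 1\pmod{a_0}$; after that, the coprimality bookkeeping reduces entirely to the pairwise coprimality and reciprocal-sum identity of Sylvester's sequence recorded in the first paragraph.
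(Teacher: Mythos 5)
Your proposal is correct and follows essentially the same route as the paper: compute $\sum a_i=h$ via the reciprocal identity, verify well-formedness from the pairwise coprimality of the Sylvester numbers and the consecutive pairs $(a_0,a_1)$ and $(s_{n-1}-2,2s_{n-1}-3)$, and reduce canonicity to the coordinate points via Remark~\ref{open} and the subset trick of \cite[Lemma 2.11]{ETW21volume}, with exactly the paper's choice of subsets (all remaining weights for $i\ge 1$, and $\{2,\dots,n\}$ with sum $(2s_{n-1}-3)a_0$ for the point $a_0$). The only difference is that you spell out the Fano-index step via $\mathrm{Cl}(X)\cong\mathbb{Z}$, which the paper leaves implicit.
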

\begin{proof}
 We have $X$ is well-formed. Indeed,
 since $\gcd(s_{n-1}-1,s_{n-1}-2)=1$, we have $\gcd(a_n,\ldots,\widehat{a_i},\ldots,a_2,a_1,a_0)=1$ for $2\leq i\leq n$. 
 Since $\gcd(2(s_{n-1}-2),2s_{n-1}-3)=1$, we have $\gcd(s_{n-1}-2,2s_{n-1}-3)=1$.
 Together with $\gcd((s_{n-1}-1)/s_j,s_{n-1}-2)=1$, we get $\gcd(h/s_j,a_0)=1$ for $0\leq j\leq n-2$. Hence $ \gcd(a_n,\ldots,a_2,\widehat{a_1},a_0)=1.$
 Note that $\gcd(2(s_{n-1}-1),2(s_{n-1}-1)-1)=1$. So $\gcd(s_{n-1}-1,2s_{n-1}-3)=1$. 
 Since $\gcd((s_{n-1}-1)/s_0,\ldots,(s_{n-1}-1)/s_{n-2},s_{n-1}-1)=1$, we obtain $\gcd(a_n,\ldots,a_2,a_1,\widehat{a_0})=1.$

 The anti-canonical divisor $-K_X=\mathcal{O}(a_n+\cdots+a_0)=\mathcal{O}(h)$ since 
 \begin{equation*}
\begin{split}
&a_n+\cdots+a_0=h(\frac{1}{s_0}
+\cdots+\frac{1}{s_{n-2}})+2s_{n-1}-3 \\
&=(s_{n-1}-1)(2s_{n-1}-3)(1-\frac{1}{s_{n-1}-1})+2s_{n-1}-3\\
&=(s_{n-1}-1)(2s_{n-1}-3)=h.
\end{split}
\end{equation*}
Thus $X$ has Fano index $h$.
By Remark \ref{open}, in order to show $X$ has canonical singularities, it suffices to show the quotient singularity $\frac{1}{a_i}(a_n,\ldots,\widehat{a_i},\ldots,a_0)$ is canonical for $0\leq i \leq n$. 
Since $h$ is a multiple of $a_i$ for $1\leq i\leq n$, the sum $a_{n}+\cdots+\widehat{a_i}+\cdots+a_3+a_2+a_1+a_0=h-a_i$ is always a multiple of $a_i$ for $1\leq i \leq n$. Also since $\gcd(a_i,a_0)=1$ for $1\leq i \leq n$, we have $\frac{1}{a_i}(a_n,\ldots,\widehat{a_i},\ldots,a_0)$ is canonical for $1\leq i \leq n$ by Lemma 2.11 in \cite{ETW21volume}.
Note that 
\begin{equation*}
\begin{split}
a_n+\cdots+a_2&=h(\frac{1}{2}+\cdots+\frac{1}{s_{n-2}})=h(1-\frac{1}{s_{n-1}-1})\\
&=(s_{n-1}-1)(2s_{n-1}-3)-(2s_{n-1}-3)=(2s_{n-1}-3)a_0.
\end{split}
\end{equation*}
Also since $\gcd(a_0,a_n)=1$, we have $\frac{1}{a_0}(a_n,\ldots,a_1)$ is canonical by Lemma 2.11 in \cite{ETW21volume}.
 \end{proof}

When $n=2$, the weighted projective space $X=\mathbb{P}^2(3,2,1)$ has Fano index $6$
which is the Fano largest index among all weighted projective planes with canonical singularities by Brown and Kasprzyk \cite{database}. In Fact, we show $X=\mathbb{P}^2(3,2,1)$ has the largest Fano index in greater generality.

\begin{proposition}\label{cdelpezzo6}
Among all canonical del Pezzo surfaces, the weighted projective space $X=\mathbb{P}^2(3,2,1)$ has the largest Fano index $6$.   \end{proposition}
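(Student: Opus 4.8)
The plan is to show $\mathrm{FI}(X)\le 6$ for every canonical del Pezzo surface $X$; the bound is sharp because $\mathbb{P}^2(3,2,1)$ has Fano index $6$ by Theorem \ref{largestindexcan} (case $n=2$). Fix such an $X$, write $m=\mathrm{FI}(X)$, and pick a Weil divisor $A$ with $-K_X\sim_{\mathbb{Q}}mA$. Let $f\colon Y\to X$ be the minimal resolution. Canonical surface singularities are Du Val, hence Gorenstein, so $-K_X$ is Cartier and $f$ is crepant: $-K_Y=f^{*}(-K_X)$ is nef and big, $Y$ is a weak del Pezzo surface, and in particular $Y$ is rational with $d:=K_X^{2}=K_Y^{2}=10-\rho(Y)\in\{1,\dots,9\}$.

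The first main point is the divisibility $m\mid d$. Since $-K_X$ is Cartier, the intersection number $(-K_X)\cdot A$ is an integer and depends only on the linear class of $A$; as $mA\sim_{\mathbb{Q}}-K_X$ this gives $(-K_X)\cdot A=\tfrac1m(-K_X)\cdot(mA)=\tfrac1m(-K_X)^{2}=\tfrac{d}{m}$, so $m\mid d$. (Equivalently, on $Y$ one has $(-K_X)\cdot A=(-K_Y)\cdot\widetilde{A}\in\mathbb{Z}$ by the projection formula and crepancy, where $\widetilde{A}$ is the strict transform.) Thus $m\le d\le 9$, and already $m\le 6$ whenever $d\le 6$.

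It remains to exclude $m\in\{7,8,9\}$, which by the above forces $m=d\in\{7,8,9\}$, so I would invoke the short classification of weak del Pezzo surfaces of degree $\ge 7$: for $d=9$, $Y=\mathbb{P}^2$; for $d=8$, $Y\in\{\mathbb{P}^1\times\mathbb{P}^1,\ \mathbb{F}_1,\ \mathbb{F}_2\}$; for $d=7$, $Y=\mathrm{Bl}_2\mathbb{P}^2$ (two possibly infinitely near points), whose only $(-2)$-curve (if any) is the strict transform of the first exceptional curve. Using the identification $\mathrm{Cl}(X)\cong\mathrm{Pic}(Y)/L$, with $L$ the lattice spanned by the $(-2)$-curves contracted by $f$ — which here is either $0$ or generated by a single class of self-intersection $-2$, hence primitive, so the quotient is torsion-free — one computes $\mathrm{FI}(X)$ as the divisibility of the image of $-K_Y$ in $\mathrm{Pic}(Y)/L$: this yields $3$ for $\mathbb{P}^2$, $2$ for $\mathbb{P}^1\times\mathbb{P}^1$, $1$ for $\mathbb{F}_1$, $4$ for $\mathbb{F}_2$ (where $X=\mathbb{P}(1,1,2)$), and $1$ for $\mathrm{Bl}_2\mathbb{P}^2$ and its unique possible $A_1$-contraction. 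In every case $\mathrm{FI}(X)\le 4<7$, contradicting $m\ge 7$; hence $\mathrm{FI}(X)\le 6$, with equality attained by $\mathbb{P}^2(3,2,1)$.

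The step requiring the most care is the divisibility $m\mid d$: it hinges on $X$ being Gorenstein (so that $-K_X$ is Cartier and $(-K_X)\cdot A\in\mathbb{Z}$ for a Weil divisor $A$), together with invariance of the intersection pairing under $\mathbb{Q}$-linear equivalence. After that, the remaining work is the finite, explicit verification in degrees $7$, $8$, $9$, for which the classification of high-degree weak del Pezzo surfaces and the description of their Weil class groups via the minimal resolution suffice.
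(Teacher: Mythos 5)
Your argument is correct, but it follows a genuinely different route from the paper. The paper reduces to the Miyanishi--Zhang classification: it first bounds the index on canonical del Pezzo surfaces of Picard rank one using their tables of degrees and of $\mathrm{Cl}/\mathrm{Pic}$ (together with Lipman's local class groups of Du Val points), handles the two borderline Dynkin types and the rank-two case via a contraction $Z\to S$ and a primitivity lemma for $K$ on iterated blow-ups of $\mathbb{P}^2$ (Lemmas \ref{primitiveK} and \ref{Picardrank1}). You instead exploit Gorenstein-ness directly: since $-K_X$ is Cartier, $(-K_X)\cdot A\in\mathbb{Z}$ forces $\mathrm{FI}(X)\mid (-K_X)^2=K_Y^2=10-\rho(Y)\leq 9$, so an index $\geq 7$ would force degree $7$, $8$ or $9$, and these are excluded by the short classification of weak del Pezzo surfaces of degree $\geq 7$ together with the identification $\mathrm{Cl}(X)\cong\Pic(Y)/\langle(-2)\text{-curves}\rangle$ (torsion-free here, since a $(-2)$-class is automatically primitive), giving indices at most $4$ in those cases. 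Your divisibility step $m\mid K_X^2$ is sound (a $(-K_X)$-degree of a Weil class is an integer and is invariant under $\mathbb{Q}$-linear equivalence), and the degree-$\geq 7$ case check is complete, including $\mathbb{F}_2\to\mathbb{P}(1,1,2)$ with index $4$ and the infinitely-near $\mathrm{Bl}_2\mathbb{P}^2$ with its single $A_1$-contraction. What each approach buys: yours is more self-contained and avoids the case tables entirely, at the cost of invoking the standard theory of weak del Pezzo surfaces and the resolution description of $\mathrm{Cl}$; the paper's proof leans on published classifications and, via the contraction step, sidesteps any need to classify minimal resolutions, but must chase through more numerical cases. Both proofs hinge on the same key fact that canonical surface singularities are Gorenstein.
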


In order to show Proposition \ref{cdelpezzo6}, we need Lemma \ref{primitiveK} and Lemma
\ref{Picardrank1}.

\begin{lemma}\label{primitiveK}
Let $X$ be a smooth 
projective
surface and $Y$ be the blow-up of $X$ at a point. Then
$K_Y$ is always primitive, i.e., then there exists no element $A\in \Pic(Y)$ such that $K_Y\thicksim_{\mathbb{Q}}mA$ for some integer $m \geq 2$.    
\end{lemma}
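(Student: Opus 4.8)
The plan is to exploit the exceptional curve of the blow-up together with the fact that $\mathbb{Q}$-linear equivalence of divisors is compatible with the intersection pairing. Write $\pi\colon Y\to X$ for the blow-up of the smooth surface $X$ at a point $p$, and let $E\subset Y$ be the exceptional divisor. First I would recall the standard facts that $E$ is a smooth rational curve with $E^2=-1$, and that, by the adjunction formula $(K_Y+E)\cdot E=\deg K_E=-2$, one obtains $K_Y\cdot E=-1$.

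Next, suppose for contradiction that there exist a class $A\in\Pic(Y)$ and an integer $m\geq 2$ with $K_Y\sim_{\mathbb{Q}}mA$. Intersecting this relation with the curve $E$ gives $K_Y\cdot E=m\,(A\cdot E)$, that is, $-1=m\,(A\cdot E)$. Finally I would observe that, because $Y$ is smooth and $A$ is the class of a Cartier divisor, the intersection number $A\cdot E$ is an integer; hence $m$ divides $1$, contradicting $m\geq 2$. Therefore no such $A$ exists, and $K_Y$ is primitive.

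The argument is short and I do not anticipate a genuine obstacle. The only points that require (minor) care are that $A\cdot E\in\mathbb{Z}$ — this is where smoothness of $Y$, equivalently of $X$, is used, so that every element of $\Pic(Y)$ is Cartier — and that one may legitimately pass from the $\mathbb{Q}$-linear equivalence $K_Y\sim_{\mathbb{Q}}mA$ to the numerical identity $K_Y\cdot E=m\,(A\cdot E)$. Smoothness of $X$ thus enters only through guaranteeing that $Y$ is smooth and that $E$ is a genuine $(-1)$-curve with the asserted numerical invariants.
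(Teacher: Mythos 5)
Your argument is correct and is essentially the same as the paper's proof: both intersect the relation $K_Y\sim_{\mathbb{Q}}mA$ with the exceptional curve $E$, use $K_Y\cdot E=-1$, and conclude from the integrality of $A\cdot E$ (smoothness of $Y$) that $m$ must divide $1$. Your additional justification of $K_Y\cdot E=-1$ via adjunction is a minor elaboration of the same route, not a different method.
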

\begin{proof}
Let $E$ be the exceptional divisor of the blow up. We have $K_Y\cdot E=-1$. If $K_Y\thicksim_{\mathbb{Q}}mA$  for some positive integer $m$ and $A\in \Pic(Y)$,
then $m(A\cdot E)=-1$. Since $Y$ is smooth, we have $A\cdot E$ is an integer. Hence $m=1$.
\end{proof}

\begin{lemma}\label{Picardrank1}
For a canonical del Pezzo surface $S$ with Picard number one, the Fano index $\mathrm{FI}(S)\leq6$.
\end{lemma}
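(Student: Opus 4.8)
The plan is to pass to the minimal resolution $\pi\colon\widetilde S\to S$ and read off the Fano index from the Picard lattice of $\widetilde S$. Since $S$ is a del Pezzo surface with canonical singularities, its singularities are Du Val, so $\pi$ is crepant, $K_{\widetilde S}=\pi^*K_S$. Hence $-K_{\widetilde S}$ is nef and big with $K_{\widetilde S}^2=K_S^2=:d\in\{1,\dots,9\}$, so $\widetilde S$ is a weak del Pezzo surface, and its exceptional curves $E_1,\dots,E_N$ (the $(-2)$-curves contracted by $\pi$) span a negative definite sublattice $\Lambda\subset\Pic(\widetilde S)$ with $\Pic(\widetilde S)/\Lambda\cong\mathrm{Cl}(S)$. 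Because $S$ has Picard number one, $N=\rho(\widetilde S)-1$; and since $-K_{\widetilde S}\cdot E_i=0$ for all $i$ while $K_{\widetilde S}^2\neq0$, the sublattices $\Lambda$ and $K_{\widetilde S}^{\perp}$ have the same rank, so $K_{\widetilde S}^{\perp}$ is the saturation of $\Lambda$. Therefore $\mathrm{Cl}(S)/(\text{torsion})\cong\Pic(\widetilde S)/K_{\widetilde S}^{\perp}\cong\mathbb Z$, and $\mathrm{FI}(S)$ equals the largest positive integer dividing the image of $[-K_S]$ in this copy of $\mathbb Z$ (equivalently, the image of $-K_{\widetilde S}$ under $\Pic(\widetilde S)\to\Pic(\widetilde S)/K_{\widetilde S}^{\perp}$).

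If $\widetilde S$ is not minimal it is the blow-up of a smooth projective surface at a point (Castelnuovo), so $K_{\widetilde S}$ is primitive in $\Pic(\widetilde S)$ by Lemma~\ref{primitiveK}. Since $\Pic(\widetilde S)=H^2(\widetilde S,\mathbb Z)$ is a unimodular lattice, primitivity of $K_{\widetilde S}$ forces the homomorphism $v\mapsto -K_{\widetilde S}\cdot v$ on $\Pic(\widetilde S)$ to be surjective onto $\mathbb Z$ (indeed it takes the value $1$ on any $(-1)$-curve); as its kernel is $K_{\widetilde S}^{\perp}$, it identifies $\Pic(\widetilde S)/K_{\widetilde S}^{\perp}\cong\mathbb Z$ sending $-K_{\widetilde S}$ to $(-K_{\widetilde S})^2=d$. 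So $\mathrm{FI}(S)=d$ in this case, and it remains to show $d\le 6$. But a weak del Pezzo surface of degree $8$ or $9$ that is not minimal is $\mathbb F_1=\mathrm{Bl}_1\mathbb P^2$, which has no $(-2)$-curve and hence $\rho(S)=2$; and a weak del Pezzo surface of degree $7$ has at most one $(-2)$-curve (a short root-system computation on $\mathrm{Bl}_2\mathbb P^2$), so again $\rho(S)\ge 2$. Thus $d\le 6$, giving $\mathrm{FI}(S)\le 6$.

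It remains to treat the case where $\widetilde S$ is minimal. Then $\widetilde S$ is a minimal rational surface with $-K_{\widetilde S}$ nef, so $\widetilde S\in\{\mathbb P^2,\ \mathbb P^1\times\mathbb P^1,\ \mathbb F_2\}$. Here $\mathbb P^1\times\mathbb P^1$ has no $(-2)$-curve, so $\rho(S)=2$ and it is excluded; $\widetilde S=\mathbb P^2$ gives $S=\mathbb P^2$ with $\mathrm{FI}(S)=3$; and $\widetilde S=\mathbb F_2$ contracts to the quadric cone $S=\mathbb P^2(1,1,2)$, where $-K_S=\mathcal O(4)$ and so $\mathrm{FI}(S)=4$. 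Combining all cases gives $\mathrm{FI}(S)\le 6$, with equality attained by $\mathbb P^2(3,2,1)$ of Theorem~\ref{largestindexcan}.

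The step I expect to be the main obstacle is the lattice bookkeeping of the first paragraph, above all the assertion that $K_{\widetilde S}^{\perp}$ is the saturation of the span of the exceptional curves — this is where Picard number one enters decisively and what makes $\mathrm{FI}(S)$ equal to $K_S^2$ — together with the low-degree input needed to rule out $d=7,8,9$, i.e.\ the fact that a canonical del Pezzo surface of Picard number one and degree at least $7$ is either $\mathbb P^2$ or the quadric cone. Once these are in place the bound $\mathrm{FI}(S)\le 6$ follows immediately.
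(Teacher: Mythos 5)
Your proof is correct, but it takes a genuinely different route from the paper's. The paper argues through the Miyanishi--Zhang classification of rank-one Gorenstein del Pezzo surfaces: for each Dynkin type it reads the degree $(-K_S)^2$ and the order of $\mathrm{Cl}(S)/\Pic(S)$ off Tables I--II, bounds the index by a square-divisor computation, and then invokes Lemma \ref{primitiveK} on the minimal resolution only to exclude the stray value $m=8$ in the two cases of type $2A_1+A_3$ and $D_5$. You avoid the classification entirely: after passing to the minimal resolution you identify $\mathrm{Cl}(S)/(\mathrm{torsion})$ with $\Pic(\widetilde S)/K_{\widetilde S}^{\perp}$ (Picard number one is exactly what makes $K_{\widetilde S}^{\perp}$ the saturation of the lattice spanned by the contracted $(-2)$-curves), so the Fano index is the divisibility of the image of $-K_{\widetilde S}$ there; when $\widetilde S$ is not minimal, a $(-1)$-curve (or Lemma \ref{primitiveK} plus unimodularity of $\Pic(\widetilde S)$) makes $v\mapsto -K_{\widetilde S}\cdot v$ surjective, giving the sharper conclusion $\mathrm{FI}(S)=K_S^2$, and it only remains to rule out degrees $7,8,9$ at Picard number one (the $A_1$ root computation in degree $7$, and $\mathbb{F}_1$, $\mathbb{P}^2$ in degrees $8,9$), while the minimal cases $\mathbb{P}^2$, $\mathbb{P}^1\times\mathbb{P}^1$, $\mathbb{F}_2$ are checked directly. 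What the paper's approach buys is brevity, since the same tables are cited elsewhere in the proof of Proposition \ref{cdelpezzo6}; what yours buys is independence from the classification and strictly more information, namely the exact value of the index ($K_S^2$, or $3$ and $4$ in the minimal cases) rather than only the bound $\mathrm{FI}(S)\le 6$. If you write this up, do state the standard facts you lean on: $\widetilde S$ is rational, Du Val singularities are $\mathbb{Q}$-factorial so $\mathrm{Cl}(S)$ has rank one, and $\mathrm{Cl}(S)\cong\Pic(\widetilde S)/\langle E_1,\ldots,E_N\rangle$ via pushforward.
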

\begin{proof}
Table I in \cite{MiyanishiZhang1988} gives the classification of canonical (equivalent to Gorenstein in dimension $2$) del Pezzo surfaces $S$ with Picard number one. We also know the canonical volume $(-K_S)^2$ from Table II in \cite{MiyanishiZhang1988}

Assume that $-K_S\thicksim_{\mathbb{Q}} mA$ for some integer $m>0$ and $A\in \mathrm{Cl}(S)$.
For all cases in Table I in \cite{MiyanishiZhang1988}, let $t=d$ if $\mathrm{Cl}(S)/\mathrm{Pic}(S)=\mathbb{Z}/d\mathbb{Z}$ for some integer $d>0$, $t=\mathrm{lcm}\{d_1, d_2\}$ if $\mathrm{Cl}(S)/\mathrm{Pic}(S)=\mathbb{Z}/d_1\mathbb{Z}\oplus \mathbb{Z}/d_2\mathbb{Z}$ for some integer $d_1>0$ and $d_2>0$ and $t=1$ if $\mathrm{Cl}(S)/\mathrm{Pic}(S)=0$. Then $(-K_S)^2=\frac{m^2}{t^2}(tA)^2$, where $(tA)^2\in \mathbb{Z}$ since $tA$ is Cartier. So $m^2=\frac{(-K_S)^2\cdot t^2}{(tA)^2}$. 
Let $u$ be the largest integer such that $u^2$ is a factor of $\frac{(-K_S)^2\cdot t^2}{(tA)^2}$. Then $m\leq u$. Therefore, the Fano index $\mathrm{FI}(S)\leq u$.
By Table I and II in \cite{MiyanishiZhang1988}, we have $u\leq 6$ except for the two cases that $S$ has Dynkin type of $2A_1+A_3$ and $S$ has Dynkin type of $D_5$.

By Table I and II in \cite{MiyanishiZhang1988}, if $S$ has Dynkin type of $2A_1+A_3$ or Dynkin type of $D_5$, then $(-K_S)^2=4$ and $4A\in \Pic(S)$. Hence $m^2=\frac{4\cdot 16}{(4A)^2}$, which implies $m\leq 6$ or $m=8$. 
It is sufficient to show that $m$ cannot be $8$.
Let $p:Y \rightarrow S$ be the minimal resolution of $S$. Then $K_Y=p^*K_S$ and $p^*(4A)\in\Pic(Y)$. 
If $-K_S\thicksim_{\mathbb{Q}}8A$, then $-K_Y\thicksim_{\mathbb{Q}}2p^*(4A)$. Note that $Y$ can be obtained by several blows up of points on smooth surfaces starting with $P^2$. So we get contradiction by Lemma \ref{primitiveK}.
\end{proof}

\begin{proof}[Proof of Proposition \ref{cdelpezzo6}]
Let $Z$ be a canonical del Pezzo surface.
By \cite[Lemma 2.]{MiyanishiZhang1988II}, there is a contraction $\pi:Z\rightarrow S$, where $S$ is a canonical del Pezzo surfaces with Picard rank one or two. We have $K_Z=\pi^*(K_S)+E$, where $E$ is a linear combination of exceptional divisors with integer coefficients. Then $\pi_*(K_Z)=K_S$. If $Z$ has Fano index larger than $6$, this means $K_Z\thicksim_{\mathbb{Q}}mA$ for some $A
\in \mathrm{Cl}(Z)$ and $m>6$. Hence $K_S\thicksim_{\mathbb{Q}}m\pi_*(A)$ with $\pi_*(A) \in \mathrm{Cl}(S)$, which implies Fano index $\mathrm{FI}(S)>6$. By Lemma \ref{Picardrank1}, we only need to show $\mathrm{FI}(S)\leq 6$ if $S$ has Picard rank two.

Now we assume $S$ has Picard rank two and $-K_S\thicksim_{\mathbb{Q}} mA$ for some integer $m>0$ and $A\in \mathrm{Cl}(S)$. 
By \cite[Lemma 2.]{MiyanishiZhang1988II}, we have $ 9-(K_Z)^2\geq (K_S)^2-(K_Z)^2$. So $9\geq (K_S)^2$.
If $S$ is $\mathbb{P}^1 \times \mathbb{P}^1$, 
then $\mathrm{Cl}(S)=\Pic(S)$. So $A$ is Cartier.
If $S$ is not $\mathbb{P}^1 \times \mathbb{P}^1$, all the possible Dynkin types that $S$ could have are given in \cite[Lemma 5.]{MiyanishiZhang1988II} as follows: $6A_1$,
$4A_1+A_3$, $4A_1$, $2A_1+D_4$, $2A_1+D_5$, $2A_3$, $A_3+D_4$, $D_4$, $D_6$, $D_7$.
Note the local class group of $A_n$, $D_n$($n$ even) and $D_n$($n$ odd) are $\mathbb{Z}/(n+1))\mathbb{Z}$, $\mathbb{Z}/2\mathbb{Z}\oplus \mathbb{Z}/2\mathbb{Z}$ and $\mathbb{Z}/4\mathbb{Z}$ respectively \cite[IV.24.]{Lipman1969}. Thus $4A$ is a cartier divisor. Hence we get $(4A)^2 \in \mathbb{Z}$. We have $\frac{m^2}{16}(4A)^2=(K_S)^2\leq 9$. So $m^2\leq \frac{9\cdot 16}{(4A)^2}$, which implies $m\leq 6$ or $m=12$.
It is sufficient to show that $m$ cannot be $12$.
Let $p:Y \rightarrow S$ be the minimal resolution of $S$. Then $K_Y=p^*K_S$ and $p^*(4A)\in\Pic(Y)$. 
If $-K_S\thicksim_{\mathbb{Q}}12A$, then $-K_Y\thicksim_{\mathbb{Q}}3p^*(4A)$. Note that $Y$ can be obtained by several blows up of points on smooth surfaces starting with $P^2$. So we get contradiction by Lemma \ref{primitiveK}.
So Fano index $\mathrm{FI}(S)\leq 6$.

The weighted projective space $\mathbb{P}(1,2,3)$ has anti-canonical class equals $\mathcal{O}(6)$, so has Fano index $6$.
\end{proof}

When $n=3$, the weighted projective space $X=\mathbb{P}^3(33,22,6,5)$ has Fano index $66$ which is known to be the largest Fano index among all weighted projective spaces of dimension $3$ with canonical singularities (see \cite[Table 3]{Kasprzyk2010} and \cite[2.6.]{ALNill}).
When $n=4$, the weighted projective space $X=\mathbb{P}^4(1743,1162,498,42,41)$ has Fano index $3486$ which is known to be the largest Fano index among all weighted projective spaces of dimension $4$ with canonical singularities \cite[Theorem 3.6 (ii)]{Kasprzyk2013}. 
 Therefore, we are motivated to conjecture:

\begin{conjecture}\label{conjectureFindexcanonical}
For each integer $n\geq 2$, let $X$ be the $n-$fold in Theorem \ref{largestindexcan}. Then the Fano index $\mathrm{FI}(X)=(s_{n-1}-1)(2s_{n-1}-3)$ of $X$ is the largest possible Fano index among all Fano $n-$folds with canonical singularities.
\end{conjecture}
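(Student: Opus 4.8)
The plan is to follow the strategy of the proof of Proposition \ref{cdelpezzo6}: first use the minimal model program to reduce to the case of Picard number one, and then bound the Fano index in that case. For the reduction, let $Z$ be a canonical Fano $n$-fold with $-K_Z\thicksim_{\mathbb{Q}} mA$ for an ample Weil divisor $A$, where $m=\mathrm{FI}(Z)$. Passing to a small $\mathbb{Q}$-factorialization $Z'\to Z$, the variety $Z'$ is $\mathbb{Q}$-factorial and canonical, $-K_{Z'}$ is nef and big, and $-K_{Z'}\thicksim_{\mathbb{Q}} mA'$ for the pullback $A'$ of $A$. Since $K_{Z'}$ is not pseudoeffective, the $K_{Z'}$-MMP terminates with a Mori fiber space $g\colon W\to T$; each step is a divisorial contraction or a flip, under which both $K$ and $\mathbb{Q}$-linear equivalence of Weil divisors push forward, so on $W$ one still has $-K_W\thicksim_{\mathbb{Q}} mA_W$ with $W$ canonical and $\mathbb{Q}$-factorial. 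Restricting to a general fiber $F$ of $g$ and using $K_F=(K_W)|_F$, the variety $F$ is a canonical Fano variety with $-K_F\thicksim_{\mathbb{Q}} m(A_W|_F)$ and $A_W|_F$ ample, so $\mathrm{FI}(F)\geq m$. If $\dim T>0$, then $\dim F<n$, and granting the conjecture inductively in lower dimension, together with the fact that $(s_{k-1}-1)(2s_{k-1}-3)$ increases in $k$, this already forces $m<h$. Hence one may assume $\dim T=0$, i.e. that $Z=W$ is a $\mathbb{Q}$-factorial canonical Fano $n$-fold of Picard number one.

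In that case $\mathrm{Cl}(Z)$ and $\Pic(Z)$ are finitely generated of rank one, so the group $\mathrm{Cl}(Z)/\Pic(Z)$ is finite; let $r$ be its exponent, so that $rD$ is Cartier for every Weil divisor $D$. Then $(rA)^n$ is a positive integer, hence $(rA)^n\geq 1$, and from $\vol(Z)=(-K_Z)^n=m^nA^n$ one obtains $m^n\leq r^n\,\vol(Z)$. Feeding in Nill's volume conjecture $\vol(Z)\leq 2(s_{n-1}-1)^2$ \cite{Nill2007}, which is a theorem for $n\leq 3$ by Prokhorov \cite{Prokhorov2005}, bounds $m$, but only very crudely, far above $h$. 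The substantive point is instead to bound $A^n$ from below in terms of $r$ and the singularity data of $Z$, i.e. to prove a sharp inequality whose unique equality case is the weighted projective space of Theorem \ref{largestindexcan}.

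The reduction step above is routine. The hard part will be the Picard-number-one case: for $n\geq 4$ there is no classification of canonical Fano $n$-folds of Picard number one, Nill's volume bound is itself still open in that range, and even granting it one still needs a new, essentially combinatorial optimization showing that among all admissible data $(\vol(Z),r,A^n)$ the Sylvester-sequence example maximizes $m$. In the toric case this can be attacked directly by a finite search over weight systems, as Kasprzyk carried out for $n\leq 4$ \cite{Kasprzyk2013}; handling non-toric $Z$ would presumably require either degenerating $Z$ to a toric variety without decreasing the Fano index, or adapting the techniques used to prove the corresponding bounds on volume.
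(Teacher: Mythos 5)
You are trying to prove what the paper states only as Conjecture \ref{conjectureFindexcanonical}; the paper contains no proof of it and offers only evidence: the case $n=2$ is Proposition \ref{cdelpezzo6}, proved via the Miyanishi--Zhang classification of Gorenstein del Pezzo surfaces \cite{MiyanishiZhang1988, MiyanishiZhang1988II} together with the primitivity of $K$ on blow-ups, while for $n=3,4$ the paper only cites the known classifications of \emph{weighted projective spaces} (and toric, fake weighted projective, spaces) with canonical singularities \cite{Kasprzyk2010, Kasprzyk2013, ALNill}, which is much weaker than the statement for all canonical Fano $n$-folds. So there is no argument in the paper for your proposal to match, and your text is, by its own admission, not a proof either: it is a reduction strategy whose decisive step is explicitly left open.

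Concretely, the gap is the Picard-number-one case, and the numerical route you sketch cannot close it. From $(rA)^n\geq 1$ and $\vol(Z)=m^n A^n$ you only get $m\leq r\,\vol(Z)^{1/n}$, where $r$ is the exponent of $\mathrm{Cl}(Z)/\Pic(Z)$; there is no a priori bound on $r$ among canonical Fano $n$-folds of Picard number one, and in the conjectural extremal example $X=\mathbb{P}^n(h/s_0,\ldots,h/s_{n-2},s_{n-1}-1,s_{n-1}-2)$ the Cartier index is itself doubly exponential in $n$, so no inequality with $r$ on the right-hand side can isolate the sharp value $h=(s_{n-1}-1)(2s_{n-1}-3)$. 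Moreover the volume bound you want to feed in is itself conjectural in the generality you need (it is a theorem only for Gorenstein canonical threefolds \cite{Prokhorov2005} and in the toric/reflexive-simplex setting \cite{Nill2007}), and in the $n=2$ case the paper does not argue via volume alone but via the classification of rank-one and rank-two Gorenstein del Pezzo surfaces, the local class groups of ADE singularities, and Lemma \ref{primitiveK}; none of this has a known analogue in dimension at least $3$. The MMP reduction in your first paragraph is plausible but also not free: you must verify that the general fiber of the Mori fiber space is canonical, that the pushed-forward divisor restricts to an integral Weil divisor class on the general fiber with $-K_F\sim_{\mathbb{Q}} m(A_W|_F)$, and the induction invokes the conjecture itself in all lower dimensions, so even that part is conditional. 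In short, the proposal correctly identifies why the statement is hard, but it does not prove it, and it should not be presented as more than a reduction of the conjecture to its Picard-number-one case.
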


 A general hypersurface of degree $h$ inside $X$ is conjectured to 
 have the minimal volume among all the canonical Calabi-Yau $n-$folds with ample Weil divisor $\mathcal{O}(1)$ \cite[Conjecture 1.2]{ETW21volume}. 
 Moreover, the hypersurface in $X$ given by a certain equation is Berglund-H\"ubsch-Krawitz (BHK) mirror to another hypersurface which gives the a klt Calabi-Yau pair with standard coefficients of conjecturally largest index $h$ \cite[Remark 3.7.]{ETW22index}.

\begin{theorem}\label{largestindex}
For each integer $n\geq 3$, let 
\begin{equation*}
\begin{split}
&a_0 = \frac{1}{2}(s_{n-1}-1)-1, \\
&a_1 = \frac{1}{2}(s_{n-1}-1),\\
&a_i = \frac{1}{2}(s_{n-1}-1)\frac{s_{n-1}-2}{s_{n-i}}\ \text{  for  } \ 2 \leq i\leq n-1, \\
&a_n=\frac{1}{2}\big(\frac{1}{2}(s_{n-1}-1)(s_{n-1}-2)-1)\big),
\end{split}
\end{equation*}
  Then the weighted projective space $X=\mathbb{P}^{n}(a_n,\ldots,a_0)$ is well-formed with terminal singularities and with Fano index $\frac{1}{2}(s_{n-1}-1)^2-1$. In particular, the Fano index is larger than $2^{2^{n-1}}$.
\end{theorem}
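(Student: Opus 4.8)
The plan is to follow the template of the proof of Theorem \ref{largestindexcan}: show $X$ is well-formed, compute $-K_X$ to read off the Fano index, and verify terminality at each coordinate point. Throughout write $N=s_{n-1}$; since $n\geq 3$ we have $N\geq 7$ and, from the recursion, $N\equiv 3\pmod 4$, so every $a_i$ is a positive integer — for $2\leq i\leq n-1$ this uses $s_{n-i}\mid\tfrac{N-1}2=s_1s_2\cdots s_{n-2}$, so that $c_i:=\tfrac{N-1}{2s_{n-i}}$ is an integer and $a_i=c_i(N-2)$, and for $a_n$ it uses that $\tfrac12(N-1)(N-2)=\tfrac{N-1}2(N-2)$ is odd. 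The first step is to record the three identities
$$a_1=a_0+1,\qquad \sum_{i=2}^{n-1}a_i=a_n-a_0,\qquad 2a_n=Na_0,$$
all immediate consequences of $\tfrac1{s_0}+\cdots+\tfrac1{s_{n-2}}=1-\tfrac1{N-1}$; note the last one also reads $2a_n+1=\tfrac12(N-1)(N-2)$. Adding them gives $\sum_{i=0}^n a_i=a_0+1+2a_n=\tfrac12(N-1)^2-1$, so $-K_X=\mathcal{O}\!\big(\tfrac12(N-1)^2-1\big)$, and since $\mathrm{Cl}(X)$ is freely generated by $\mathcal{O}(1)$ for a well-formed weighted projective space, the Fano index equals $\tfrac12(s_{n-1}-1)^2-1$. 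The bound $\tfrac12(N-1)^2-1>2^{2^{n-1}}$ follows from $N-1=s_0\cdots s_{n-2}\geq\tfrac32\cdot 2^{2^{n-2}}$ (using $s_j>2^{2^{j-1}}$), which yields $\tfrac12(N-1)^2-1\geq\tfrac98\cdot 2^{2^{n-1}}-1>2^{2^{n-1}}$ for $n\geq 3$.

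For well-formedness I would verify $\gcd(a_0,\ldots,\widehat{a_i},\ldots,a_n)=1$ for $0\leq i\leq n$. Each case reduces to one small coprimality fact: $\gcd(a_0,a_1)=\gcd(a_0,a_0+1)=1$ handles $2\leq i\leq n$; $\gcd(a_1,a_n)=1$ handles $i=0$; and $\gcd(a_0,a_2)=1$ handles $i=1$. These, together with the fact I use repeatedly that $\gcd(a_0,a_i)=1$ for all $1\leq i\leq n-1$, are elementary: modulo each $s_j$ with $1\leq j\leq n-2$ one has $\tfrac{N-1}2\equiv 0$, hence $a_0=\tfrac{N-3}2\equiv-1$ and $N-2\equiv-1$, so each $c_i$ (a product of such $s_j$) is coprime to $a_0$; and $\gcd(\tfrac{N-1}2,N)=\gcd(a_0,N-2)=1$ trivially while $\gcd(\tfrac{N-1}2,N-3)=1$ since that gcd divides $2$ and $\tfrac{N-1}2$ is odd. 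From these, $\gcd(a_0,a_i)=c_i(N-2)$-coprimality and $\gcd(a_1,a_n)=1$ follow.

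The heart of the argument is terminality. By Remark \ref{open} it suffices to show each coordinate point, a cyclic quotient singularity of type $\tfrac1{a_i}(a_0,\ldots,\widehat{a_i},\ldots,a_n)$, is terminal, and I apply Lemma \ref{checkterminal} with the explicit subset
$$I=\{1,2,\ldots,n\}\setminus\{i\}\ \ (2\leq i\leq n-1),\qquad I=\{2,3,\ldots,n\}\ \ (i\in\{0,1\}),\qquad I=\{0,2,3,\ldots,n-1\}\ \ (i=n).$$
In each case $\sum_{k\in I}a_k$ is a multiple of $a_i$: for $2\leq i\leq n-1$ the identities give $\sum_{k\in I}a_k=2a_n+1-a_i$, and $a_i=c_i(N-2)$ divides $2a_n+1=\tfrac{N-1}2(N-2)$ because $c_i\mid\tfrac{N-1}2$; for $i\in\{0,1\}$ the sum equals $2a_n-a_0=2a_0a_1$; and for $i=n$ it equals $a_0+\sum_{m=2}^{n-1}a_m=a_n$. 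The condition $\gcd\!\big(\{a_k:k\in I\}\cup\{a_i\}\big)=1$ is, in every one of these four cases, literally one of the well-formedness conditions already verified, and the last hypothesis of Lemma \ref{checkterminal} — that some $a_j$ with $j\notin I\cup\{i\}$ is coprime to $a_i$ — is supplied by $a_0$ when $1\leq i\leq n-1$ and by $a_1$ when $i\in\{0,n\}$. Hence $X$ is terminal.

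I expect the main obstacle to be this last step, and within it the discovery of the correct subsets $I$: the divisibilities $\sum_{k\in I}a_k\equiv 0\pmod{a_i}$ rely on a tight interplay between the off-by-one adjustments $a_1=a_0+1$, $2a_n=Na_0$ and the Sylvester reciprocal identity, and there is essentially no slack. As a consistency check, for $n=3$ the recipe produces $X=\mathbb{P}^3(7,5,3,2)$ of Fano index $17$, in agreement with the classification of $\mathbb{Q}$-Fano threefolds recalled in the introduction.
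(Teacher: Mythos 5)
Your proposal is correct and follows essentially the same route as the paper: the same well-formedness gcd facts ($\gcd(a_0,a_1)=\gcd(a_0,a_i)=\gcd(a_1,a_n)=1$), the same computation of $-K_X=\mathcal{O}\big(\tfrac12(s_{n-1}-1)^2-1\big)$ via the Sylvester reciprocal identity, and the same application of Lemma \ref{checkterminal} at each coordinate point with the identical subsets and coprime residual weights (your sums $(s_{n-i}-1)a_i$, $2a_0a_1=(s_{n-1}-3)a_1=(s_{n-1}-1)a_0$, and $a_n$ are exactly the paper's). The only cosmetic difference is that you package the arithmetic into the three identities $a_1=a_0+1$, $\sum_{i=2}^{n-1}a_i=a_n-a_0$, $2a_n=s_{n-1}a_0$, and you additionally spell out the bound $\tfrac12(s_{n-1}-1)^2-1>2^{2^{n-1}}$, which the paper only asserts.
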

\begin{proof}
Since $s_{n-1}-1=s_0s_1\cdot \cdots \cdot s_{n-2}$, the number $\frac{s_{n-1}-1}{2s_{n-i}}$ is an integer for $2 \leq i\leq n-1$, which implies $a_i$ is an integer for $2 \leq i\leq n-1$. We can write $a_n=\frac{1}{2}\big(\frac{1}{2}(s_{n-1}-1)(s_{n-1}-1)-\frac{s_{n-1}-1}{2}-1\big)$.
We have $\frac{s_{n-1}-1}{2}=s_1\cdot \cdots \cdot s_{n-2}$ is odd since any two numbers in the Sylvester's sequence are relatively prime. Thus $\frac{1}{2}(-\frac{s_{n-1}-1}{2}-1)$ is an integer. So $a_n$ is an integer. It is clear that $a_0$ and $I$ are integers.

Since $a_0=a_1-1$, we have $\gcd(a_0,a_1)=1$.  Note that $\gcd(\frac{1}{2}(s_{n-1}-1)-1,\frac{1}{2}(s_{n-1}-1))=1$ implies $\gcd(\frac{1}{2}(s_{n-1}-1)-1,\frac{1}{2s_{n-i}}(s_{n-1}-1))=1$. We can write $a_0=\frac{(s_{n-1}-2)-1}{2}$. Not that $\gcd((s_{n-1}-2)-1, s_{n-1}-2)=1$. Hence $\gcd(\frac{(s_{n-1}-2)-1}{2},s_{n-1}-2)=1$. This implies $\gcd(a_0,a_i)=1$ for $2 \leq i\leq n-1$. We can write $a_n=\frac{1}{2}(s_{n-1}-2)(\frac{s_{n-1}-1}{2}+1)-a_1$. So $\gcd(a_1,a_n)=\gcd(a_1,\frac{1}{2}(s_{n-1}-2)(\frac{s_{n-1}-1}{2}+1))$. Note that $\gcd(s_{n-1}-1, s_{n-1}-2)=1$ implies $\gcd(\frac{s_{n-1}-1}{2},s_{n-1}-2)=1$.
Note that $\gcd(\frac{s_{n-1}-1}{2},\frac{s_{n-1}-1}{2}+1)=1$. Hence $\gcd(\frac{s_{n-1}-1}{2},(s_{n-1}-2)(\frac{s_{n-1}-1}{2}+1))=1$, which implies $
\gcd(a_1,a_n)=\gcd(a_1,\frac{1}{2}(s_{n-1}-2)(\frac{s_{n-1}-1}{2}+1))=1$. Thus $X$ is well-formed.

By Remark \ref{open}, in order to show $X$ has terminal singularities, it suffices to show the quotient singularity $\frac{1}{a_i}(a_n,\ldots,\widehat{a_i},\ldots,a_0)$ is terminal for $0\leq i \leq n$. 
Note that
\begin{equation*}
\begin{split}
&a_{n-1}+\ldots+a_2 =\frac{1}{2}(s_{n-1}-1)(s_{n-1}-2)(\frac{1}{s_1}+\ldots+\frac{1}{s_{n-2}})\\
&=\frac{1}{2}(s_{n-1}-1)(s_{n-1}-2)(\frac{1}{s_0}+\frac{1}{s_1}\ldots+\frac{1}{s_{n-2}})-\frac{1}{2s_0}(s_{n-1}-1)(s_{n-1}-2)\\
&=\frac{1}{2}(s_{n-1}-1)(s_{n-1}-2)(1-\frac{1}{s_{n-1}-1})-\frac{1}{2s_0}(s_{n-1}-1)(s_{n-1}-2)\\
&=\frac{1}{2s_0}(s_{n-1}-1)(s_{n-1}-2)-\frac{1}{2}(s_{n-1}-2).
\end{split}
\end{equation*}
By Lemma \ref{checkterminal}, we have $\frac{1}{a_n}(a_{n-1},\ldots,a_1,a_0)$ is terminal since $a_{n-1}+\cdots+a_2+a_0=a_n$ and $\gcd(a_1, a_n)=1$. Indeed, the sum 
\begin{equation*}
\begin{split}
&a_{n-1}+\cdots+a_2+a_0\\
&=\frac{1}{2s_0}(s_{n-1}-1)(s_{n-1}-2)-\frac{1}{2}(s_{n-1}-2)+\frac{1}{2}(s_{n-1}-1)-1\\
&=\frac{1}{2s_0}(s_{n-1}-1)(s_{n-1}-2)-\frac{1}{2}=a_n.
\end{split}
\end{equation*}
For $ 2 \leq i\leq n-1$, by Lemma \ref{checkterminal}, we have $\frac{1}{a_i}(a_n,a_{n-1},\ldots,\widehat{a_i},\ldots, a_2,a_1,a_0)$ is terminal since $a_n+a_{n-1}+\ldots+\widehat{a_i}+\ldots+a_2+a_1=(s_{n-i}-1)a_i$ and $\gcd(a_0, a_i)=1$. Indeed, the sum equals
\begin{equation*}
\begin{split}
&(a_{n-1}+\cdots+a_2)+a_n+a_1-a_i\\
&=\frac{1}{2s_0}(s_{n-1}-1)(s_{n-1}-2)-\frac{1}{2}(s_{n-1}-2)\\
&+\frac{1}{2}\big(\frac{1}{2}(s_{n-1}-1)(s_{n-1}-2)-1)\big)+\frac{1}{2}(s_{n-1}-1)-a_i\\
&=\frac{1}{2}(s_{n-1}-1)(s_{n-1}-2)-a_i\\
&=s_{n-i}\frac{1}{2s_{n-i}}(s_{n-1}-1)(s_{n-1}-2)-a_i=(s_{n-i}-1)a_i.
\end{split}
\end{equation*}
By Lemma \ref{checkterminal}, we have $\frac{1}{a_1}(a_n,a_{n-1},\ldots, a_2,a_0)$ is terminal since $a_n+a_{n-1}+\ldots+a_2+a_1=(s_{n-1}-3)a_1$ and $\gcd(a_0, a_1)=1$. Indeed, the sum $a_n+a_{n-1}+\ldots+a_2$ equals
\begin{equation*}
\begin{split}
&(a_{n-1}+\cdots+a_2)+a_n\\
&=\frac{1}{2s_0}(s_{n-1}-1)(s_{n-1}-2)-\frac{1}{2}(s_{n-1}-2)\\
&+\frac{1}{2}\big(\frac{1}{2}(s_{n-1}-1)(s_{n-1}-2)-1)\big)\\
&=\frac{1}{2}(s_{n-1}-1)(s_{n-1}-2)-\frac{1}{2}(s_{n-1}-1)=(s_{n-1}-3)a_1.
\end{split}
\end{equation*}
Since $a_0=\frac{1}{2}(s_{n-1}-1)-1$. The sum $(a_{n-1}+\cdots+a_2)+a_n$ equals
$$
\frac{1}{2}(s_{n-1}-1)(s_{n-1}-2)-\frac{1}{2}(s_{n-1}-1)=(\frac{1}{2}(s_{n-1}-1)-1)(s_{n-1}-1)=(s_{n-1}-1)a_0.
$$
Since $\gcd(a_0, a_1)=1$, then by Lemma \ref{checkterminal}, we have $\frac{1}{a_0}(a_n,a_{n-1},\ldots, a_2,a_1)$ is terminal.

Note that $-K_X=\mathcal{O}(a_n+\cdots+a_0)$,
where 
\begin{equation*}
\begin{split}
&a_n+\cdots+a_0=(a_{n-1}+\cdots+a_2)+a_n+a_1+a_0\\
&=\frac{1}{2}(s_{n-1}-1)(s_{n-1}-2)+a_0=\frac{1}{2}(s_{n-1}-1)^2-1.
\end{split}
\end{equation*}
Thus the Fano index of $X$ is $\frac{1}{2}(s_{n-1}-1)^2-1$.
\end{proof}
For $n=3$, the weight projective space $X=\mathbb{P}^3(7,5,3,2)$ has Fano index $17$, which is known to be the second largest Fano index for all $\mathbb{Q}-$Fano threefolds  \cite[Theorem 1.4.]{ProkhorovQ-FanoI} \cite[Theorem 1.2.]{ProkhorovQ-FanoII}. 
For $n=4$, the weight projective space 
$X=\mathbb{P}^4(430,287,123,21,20)$ has Fano index $881$, which is known to be the largest Fano index among all well-formed weighted projective spaces with terminal singularities in dimension $4$ \cite[Theorem 3.5.]{Kasprzyk2013} \cite[Section 3.4.]{Brown-Kasprzyk}. Therefore, we are motivated to conjecture:

\begin{conjecture}\label{conjectureFindexterminal}
For each integer $n\geq 4$, let $X$ be the $n-$fold in Theorem \ref{largestindex}. Then the Fano index $\mathrm{FI}(X)=\frac{1}{2}(s_{n-1}-1)^2-1$ of $X$ is the largest possible Fano index among all Fano $n-$folds with terminal singularities.
\end{conjecture}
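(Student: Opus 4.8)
The plan is to follow the template of the known ``extremal Fano'' theorems --- Prokhorov's classification of high-index $\mathbb Q$-Fano threefolds, Nill's for Gorenstein toric Fanos, Kasprzyk's for terminal toric Fanos --- and to attack the conjecture in three nested stages of increasing generality: reduce a general terminal Fano to the toric case, the toric case to fake weighted projective spaces, and those to a purely combinatorial optimization over genuine weighted projective spaces.

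\textbf{Stage 1 (weighted projective spaces).} The concrete core is to prove the bound for a well-formed terminal $\mathbb P^n(a_0,\dots,a_n)$ alone. Here the Fano index is literally $\sum_i a_i$, because $\mathrm{Cl}(X)=\mathbb Z\cdot[\mathcal O(1)]$ and $-K_X=\mathcal O(\sum a_i)$, so the problem is to maximize $\sum a_i$ over tuples satisfying the finitely many Reid--Tai inequalities of Theorem \ref{RT} at the coordinate points (Remark \ref{open}). I would run an inductive ``peeling'' argument: show that a maximizing tuple may be ordered so that its two smallest weights are consecutive integers $a_0=a_1-1<a_1$ --- which is what makes the tuple well-formed and lets one apply Lemma \ref{checkterminal}, and which should itself be forced by terminality at those weights --- and that, writing the remaining weights as $H/c_j$ for a common $H$, terminality at the largest weight drives $\sum_j 1/c_j$ as close to $1$ from below as possible. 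That is precisely the greedy Egyptian-fraction problem, whose optimal solution is the Sylvester sequence, so unwinding it should identify the maximizer with the example of Theorem \ref{largestindex}. This is confirmed for $n\le 4$ by the classifications of Kasprzyk and Brown--Kasprzyk; the missing ingredient is a dimension-free argument ruling out ``exotic'', non-greedy tuples.

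\textbf{Stages 2 and 3 (toric, then general).} Second, I would pass to all toric terminal Fano $n$-folds; this should reduce, via a toric MMP (the Fano index does not drop under a divisorial contraction, since the canonical class pushes forward), to the $\mathbb Q$-factorial Picard-number-one case, i.e. to fake weighted projective spaces, and a fake weighted projective space cannot have Fano index larger than the weighted projective space covering it since $-K$ pulls back under the quasi-\'etale cover --- so Stage 2 collapses onto Stage 1. Third --- the serious reduction --- one must show that an arbitrary terminal Fano $n$-fold $X$ with $-K_X\sim_{\mathbb Q}mA$ has $m$ no larger than the toric optimum. In dimension three this is Prokhorov's theorem, proved via orbifold Riemann--Roch together with the classification of terminal threefold singularities into Reid baskets: one writes $\chi(\mathcal O_X(-kK_X))$ as a quasi-polynomial whose coefficients are sums of explicit local contributions, and extracts a bound on $m$ from their positivity and integrality, with boundedness of terminal Fano $n$-folds (Birkar) keeping the local data finite.

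\textbf{Main obstacle.} The crux is exactly this last reduction --- and already Stage 1 for $n\ge 5$: there is at present no dimension-independent bound for the Fano index of a terminal Fano variety, the complete results being confined to $n\le 3$ (Riemann--Roch plus the singularity classification) and to the weighted-projective / toric case in $n\le 4$ (computer-assisted enumeration of terminal lattice simplices). A proof for all $n$ would need a genuinely new mechanism --- for instance, an effective inequality forcing a large Fano index to constrain the local class groups at the singular points of $X$ so severely that $X$ is cornered into our toric example --- which is precisely why the statement is left as a conjecture, its plausibility resting on the agreement of this construction with the known maxima in dimensions $3$ and $4$.
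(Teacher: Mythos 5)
The statement you are asked to prove is a \emph{conjecture}: the paper offers no proof of it, only supporting evidence, namely that in dimension $3$ the construction $\mathbb{P}^3(7,5,3,2)$ has index $17$ (Prokhorov) and in dimension $4$ the space $\mathbb{P}^4(430,287,123,21,20)$ with index $881$ is maximal among well-formed terminal weighted projective spaces (Kasprzyk, Brown--Kasprzyk). Your submission is likewise not a proof but a program, and you say so explicitly; in that sense you have correctly diagnosed the status of the statement, and your three-stage reduction (weighted projective spaces via a Sylvester/Egyptian-fraction optimization, then fake weighted projective spaces and toric MMP, then the general case via orbifold Riemann--Roch and boundedness) is a reasonable sketch of how one might eventually attack it, echoing the Prokhorov/Nill/Kasprzyk template and the push-forward trick the paper itself uses in the surface case (Proposition \ref{cdelpezzo6}).

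Two factual corrections, though. First, your closing claim that the construction agrees ``with the known maxima in dimensions $3$ and $4$'' overstates the evidence in dimension $3$: by Prokhorov the maximal index of a $\mathbb{Q}$-Fano threefold is $19$, attained by $\mathbb{P}^3(7,5,4,3)$, and the paper's example $\mathbb{P}^3(7,5,3,2)$ attains only the \emph{second} largest value $17$ --- this is exactly why the conjecture is stated for $n\geq 4$ rather than $n\geq 3$, and any eventual proof strategy must explain why the pattern that fails at $n=3$ should hold for all larger $n$. Second, the $n=4$ evidence cited (and citable) is confined to weighted projective spaces with terminal singularities, not to all terminal Fano $4$-folds, so even the base case of your Stage 3 reduction is open; as you note, no dimension-free bound on the terminal Fano index is currently known, and Stage 1 itself lacks an argument excluding non-greedy weight tuples in dimensions $\geq 5$. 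So the proposal should be read as a plausible roadmap with a mislabeled signpost at $n=3$, not as progress toward a proof.
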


Nill found the weighted projective space $X$ in Theorem \ref{Gocanonicalindex} \cite[Corollary 6.1.]{Nill2007}. We are just proving its properties for the reader's convenience.

\begin{theorem}\label{Gocanonicalindex}
For each integer $n\geq 1$, let $h=s_n-1$, $a_i=h/s_{n-i}$ for $1\leq i\leq n$ and $a_0=1$. Then the weighted projective space $X=\mathbb{P}^n(a_n,\ldots,a_0)=\mathbb{P}^n(h/s_0,\ldots,h/s_{n-1},1)$ is well-formed with Gorenstein canonical singularities and with Fano index $h$.
\end{theorem}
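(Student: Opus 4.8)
The plan is to mirror, in simplified form, the proof of Theorem \ref{largestindexcan}; the weight $a_0 = 1$ makes all the gcd bookkeeping nearly automatic, so I expect no real obstacle. First I would check that $X$ is well-formed. Since $a_0 = 1$ belongs to every subcollection of weights obtained by deleting some $a_i$ with $i \geq 1$, each such subcollection has gcd $1$ immediately. The remaining case is deleting $a_0$: writing $h = s_n - 1 = s_0 s_1 \cdots s_{n-1}$ and $a_i = h/s_{n-i} = \prod_{j \neq n-i} s_j$, any common prime divisor of $a_1, \ldots, a_n$ would divide $h$ and hence exactly one Sylvester number $s_m$ (these being pairwise coprime), but then it fails to divide $a_{n-m} = h/s_m$, a contradiction. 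Thus $\gcd(a_1,\ldots,a_n) = 1$ and $X$ is well-formed.

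Next I would compute the anti-canonical class. Using the identity $\tfrac{1}{s_0} + \cdots + \tfrac{1}{s_{n-1}} = 1 - \tfrac{1}{s_n-1}$ recorded in Section 2,
\[
\sum_{i=0}^{n} a_i = 1 + h\sum_{k=0}^{n-1}\frac{1}{s_k} = 1 + h\Bigl(1 - \frac{1}{h}\Bigr) = h,
\]
so $-K_X = \mathcal{O}(h)$. Each weight divides $h$ (indeed $a_i s_{n-i} = h$ for $i \geq 1$, and $a_0 = 1$), so $\mathcal{O}(h)$ and hence $K_X = \mathcal{O}(-h)$ is a line bundle, i.e.\ $X$ is Gorenstein. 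For the Fano index, I would use that $\mathrm{Cl}(X) \cong \mathbb{Z}\cdot\mathcal{O}(1)$ is torsion-free: if $-K_X \sim_{\mathbb{Q}} mA$ for a positive integer $m$ and an integral Weil divisor $A \sim \mathcal{O}(k)$, then $h = mk$ in $\mathbb{Z}$, so $m \mid h$ and $m \leq h$; since $\mathcal{O}(1)$ realizes $m = h$, the Fano index is exactly $h$.

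Finally, for canonical singularities I would invoke Remark \ref{open}, which reduces the question to showing that each coordinate point, i.e.\ the cyclic quotient singularity $\tfrac{1}{a_i}(a_0, \ldots, \widehat{a_i}, \ldots, a_n)$, is canonical for $0 \leq i \leq n$. The point $i = 0$ has trivial stabilizer and is smooth. For $1 \leq i \leq n$, the sum of the remaining weights equals $h - a_i$, a multiple of $a_i$ since $a_i \mid h$, while $\gcd(a_i, a_0) = \gcd(a_i, 1) = 1$; so Lemma 2.11 of \cite{ETW21volume} applies — with the distinguished subset taken to be all the remaining weights and the $0$th coordinate supplying a weight coprime to $a_i$ — and yields that the singularity is canonical. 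The only steps requiring any care are the well-formedness check after deleting $a_0$ and matching the precise hypotheses of Lemma 2.11 of \cite{ETW21volume}, and both become trivial once one uses $a_0 = 1$ and $a_i \mid h$; hence I do not anticipate a genuine difficulty.
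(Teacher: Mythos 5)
Your proof is correct, and its overall skeleton (well-formedness, the reciprocal-sum computation giving $-K_X=\mathcal{O}(h)$, Gorenstein-ness from $a_i \mid h$) matches the paper's. The one step where you genuinely diverge is canonicity: you reduce to the coordinate points via Remark \ref{open} and then apply the canonical-singularity criterion of Lemma 2.11 in \cite{ETW21volume} with the subset of all remaining weights (the sum $h-a_i$ is a multiple of $a_i$, and $a_0=1$ is coprime to $a_i$), exactly mirroring the paper's proof of Theorem \ref{largestindexcan}; the paper instead disposes of this in one line by citing \cite[Corollary 2.10]{ETW21volume}, which covers canonicity of such Gorenstein Fano weighted projective spaces (reflexive simplices) wholesale. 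Your route is more hands-on and self-contained, at the cost of a few lines; the paper's is shorter but leans on an external statement. You also supply two details the paper leaves implicit: the verification that $\gcd(a_1,\ldots,a_n)=1$ via pairwise coprimality of the Sylvester numbers, and the argument that $\mathrm{Cl}(X)\cong\mathbb{Z}\cdot\mathcal{O}(1)$ forces the Fano index to be exactly $h$ rather than merely at least $h$. Both additions are correct and, if anything, tighten the exposition.
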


\begin{proof}
We have $X$ is well-formed since $\gcd(a_n,\ldots,\widehat{a_i},\ldots,a_2,a_1,1)=1$ for $1\leq i\leq n$ and $\gcd(h/s_0,\ldots,h/s_{n-1})=1$.
 The anti-canonical divisor $-K_X=\mathcal{O}(a_n+\cdots+a_0)=\mathcal{O}(h)$ since 
 $$a_n+\cdots+a_0=h(\frac{1}{s_0}
+\cdots+\frac{1}{s_{n-1}})+1=(s_n-1)(1-\frac{1}{s_n-1})+1=h.$$
Thus the Fano index of $X$ is $h$. Note that each weightes divides $h$, so $K_X$ is a Cartier divisor, i.e., $X$ is Gorenstein.
Also $X$ is canonical by \cite[Corollary 2.10]{ETW21volume}.
\end{proof}

Corollary 6.1 in \cite{Nill2007} shows that $X$ given by Theorem \ref{Gocanonicalindex} has largest Fano index among all well-formed weighted projective spaces of dimension $n$ with Gorenstein canonical singularities. Therefore, we have the following conjecture:

\begin{conjecture}\label{conjectureFindexGorcanonical}
For each integer $n\geq 1$, let $X$ be the $n-$fold in Theorem \ref{Gocanonicalindex}. Then the Fano index $\mathrm{FI}(X)=s_n-1$ of $X$ is the largest possible Fano index among all Fano $n-$folds with Gorenstein canonical singularities.
\end{conjecture}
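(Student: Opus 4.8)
The plan is to reduce the conjectural upper bound $\mathrm{FI}(X)\le s_n-1$ to a purely Diophantine extremal problem about unit fractions, whose solution is the Sylvester sequence. As a preliminary qualitative step, I would note that Gorenstein canonical Fano $n$-folds are canonical, hence $\epsilon$-lc, so by Birkar's solution of the Borisov--Alexeev--Borisov conjecture they form a bounded family; consequently their Fano indices are bounded and a maximum $M_n$ is attained. Since Theorem \ref{Gocanonicalindex} already gives $M_n\ge s_n-1$, the conjecture reduces to the inequality $M_n\le s_n-1$.

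First I would treat the weighted projective case, which fixes the shape of the general argument and recovers \cite[Corollary 6.1]{Nill2007}. If $X=\mathbb{P}^n(a_0,\dots,a_n)$ is well-formed then $\mathrm{Cl}(X)=\mathbb{Z}\cdot\mathcal{O}(1)$ with $\mathcal{O}(1)$ primitive, so $\mathrm{FI}(X)=\sum_i a_i=:m$, and the Gorenstein hypothesis is precisely that $a_i\mid m$ for all $i$ (such an $X$ is then automatically canonical, the associated polytope being reflexive). Setting $q_i:=m/a_i\in\mathbb{Z}_{>0}$ gives the unit-fraction identity
\[ \sum_{i=0}^{n}\frac{1}{q_i}=\frac{1}{m}\sum_{i=0}^{n}a_i=1. \]
Well-formedness forces $m=\mathrm{lcm}(q_0,\dots,q_n)$, since any larger common multiple scales every $a_i$ by a common factor. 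Thus maximizing the Fano index becomes the classical problem of maximizing $\mathrm{lcm}(q_0,\dots,q_n)$ over $n+1$ positive integers with $\sum 1/q_i=1$; its extremal solution is the Sylvester truncation $(q_0,\dots,q_n)=(s_0,\dots,s_{n-1},s_n-1)$, with value $\mathrm{lcm}=s_0\cdots s_{n-1}=s_n-1$, realized exactly by the variety of Theorem \ref{Gocanonicalindex}.

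The core of the conjecture is the passage from this toric computation to an arbitrary Gorenstein canonical Fano $X$ with $-K_X\sim_{\Q} mA$. The goal is to attach to $X$ a datum obeying the same constraint $\sum 1/q_i\le 1$, so that the Sylvester extremum again forces $m\le s_n-1$. I see two natural routes. The degeneration route would specialize $X$ to a toric (or $\mathbb{Q}$-factorial, Picard-number-one) Gorenstein canonical Fano while tracking the Fano index, for instance through a toric Newton--Okounkov degeneration of the anticanonical ring, and then apply the toric bound once one knows the index does not drop in the limit. The Calabi--Yau route, suggested by the BHK-mirror description in \cite[Remark 3.7]{ETW22index}, would convert the relation $-K_X\sim_{\Q} mA$ into a klt Calabi--Yau pair $(X,\Delta)$ with standard coefficients $1-1/q_i$ (via a boundary in $|A|$ or a cyclic/root cover) whose global index is a multiple of $m$, and then invoke the conjectural bound $s_n-1$ for the index of $n$-dimensional klt Calabi--Yau pairs, whose extremal case is once more the Sylvester configuration.

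The main obstacle is precisely this toric-to-general reduction. In the toric setting the identity $\sum 1/q_i=1$ is forced by $\mathrm{Cl}(X)=\mathbb{Z}$ together with the torus-fixed-point analysis of Theorem \ref{RT}; for general $X$ one must control the class group and the local singularity contributions simultaneously, with no a priori unit-fraction relation, and one must ensure that canonicity survives the chosen construction. Both routes also rest on inputs that are themselves open in high dimension: the non-decrease of the Fano index under degeneration, and the Calabi--Yau pair index bound, which is known only in low dimensions---consistent with the evidence cited for $n\le 4$ and with Prokhorov's classification and degree bound for threefolds \cite{ProkhorovQ-FanoI,Prokhorov2005}. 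A realistic near-term program is thus to settle the next dimensions by combining boundedness with the explicit Sylvester extremal analysis, while isolating the structural statement---a bound on the Fano index in terms of the volume and the local class groups---that would let the unit-fraction argument run without the toric hypothesis.
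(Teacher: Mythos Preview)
The statement you are addressing is a \emph{conjecture}, and the paper does not prove it. The only evidence the paper offers is the sentence immediately preceding the conjecture: Nill's result \cite[Corollary~6.1]{Nill2007} establishes the bound for Gorenstein weighted projective spaces, and the conjecture then extrapolates to all Gorenstein canonical Fano varieties. There is no ``paper's own proof'' to compare against.

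Your proposal is accordingly not a proof but a research outline, and you say as much: both of your ``routes'' from the toric case to the general case rest on inputs you explicitly flag as open (non-decrease of the Fano index under degeneration to a toric model; the index bound for klt Calabi--Yau pairs with standard coefficients). So the honest summary is that your proposal correctly isolates where the difficulty lies but does not close the gap, and no one currently knows how to.

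A few comments on the parts you do argue. Your reduction in the weighted projective case is correct: from $a_i\mid m$ you get $q_i=m/a_i$ with $\sum 1/q_i=1$, and since each $q_i\mid m$ one has $L:=\mathrm{lcm}(q_i)\mid m$, while $m/L$ divides every $a_i$ and hence equals $1$ by well-formedness; thus $\mathrm{FI}(X)=\mathrm{lcm}(q_i)$. However, the assertion that the Sylvester truncation maximizes the \emph{lcm} (as opposed to the largest denominator, which is the classical Curtiss--Erd\H{o}s statement) is not a prior ``classical'' fact you can simply invoke; it is essentially the content of Nill's theorem that you are citing, so you are not giving an independent argument here, only restating \cite[Corollary~6.1]{Nill2007}.

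Your appeal to BAB is fine (canonical is $1$-lc in Birkar's convention, so boundedness applies), but note that deducing boundedness of the Fano index from boundedness of the family is itself a nontrivial step, since the Fano index is computed in the Weil divisor class group; you would need to control the torsion of $\mathrm{Cl}(X)/\mathrm{Pic}(X)$ uniformly in the family. Finally, in your Calabi--Yau route you should be aware that the index conjecture for klt CY pairs with standard coefficients is at least as hard as the conjecture you are trying to prove, so that route is a reformulation rather than a reduction.
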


\section{Large Volume}\label{lvo}
In this section, we will find Gorenstein terminal Fano varieties with large volume. 
When dimension $n$ is lower and even, we know the optimal example among all Gorenstein terminal weighted projective spaces of dimension $n$ with large volume. We list them as follows. 
For $n=4,6,8,10$, the weight projective spaces 
\begin{equation*}
\begin{split}
&\mathbb{P}^4(2,1,1,1,1) \text{ with volume } 648,\\
&\mathbb{P}^6(8,6,4,3,1,1,1) \text{ with volume } 331776,\\
&\mathbb{P}^8(140,105,84,60,15,10,4,1,1) \text{ with volume } 21781872000,\\
&\mathbb{P}^{10}(16328, 12246,8164,6123,3768,1884,312,156,1,1,1)
\text{ with volume } \\
&23029100604532998144
\end{split}
\end{equation*}
have the largest volume among all Gorenstein terminal weighted projective spaces in dimension $n$ \cite[Table 5.]{Kasprzyk2013}.
When $n$ is even, we do not find a way to generalize these known examples to get varieties in higher dimensions.
However, when the dimension $n$ is odd, we have generalization in higher dimensions.

\begin{theorem}\label{largevolume}
For each odd integer $n=2k+1\geq 5$, where integer $k\geq 2$, let 
\begin{equation*}
\begin{split}
&h=2s_0s_1 \cdots s_{k-1}=2(s_k-1),\\
&a_0 = a_1=a_2=1, \\
&a_{2i-1} = \frac{h}{2s_{k+1-i}}=s_0s_1 \cdots \widehat{s_{k+1-i}} \cdots s_{k-1} \ \text{  for  } \ 2 \leq i\leq k-1 \text{  when  } k
\geq 3,\\
&a_{2i} = \frac{h}{s_{k+1-i}}=2s_0s_1 \cdots \widehat{s_{k+1-i}} \cdots s_{k-1} \ \text{  for  } \ 2 \leq i\leq k-1 \text{  when  } k
\geq 3, \\
&a_{n-2}=h/6=s_0s_2\cdots s_{k-1},\\
&a_{n-1}=h/4=s_1s_2\cdots s_{k-1},\\
&a_n=h/3=2s_0s_2\cdots s_{k-1},
\end{split}
\end{equation*}  
Then Gorenstein terminal weighted projective space $X=\mathbb{P}^{n}(a_n,\ldots,a_0)$ has volume $(-K_X)^n=2^{\frac{n+1}{2}}(s_{\frac{n-1}{2}}-1)^4$.
\end{theorem}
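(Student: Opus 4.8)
The plan is to verify, in turn, that the $a_i$ are positive integers, that $X$ is well-formed, that $-K_X=\mathcal{O}(h)$ with every $a_i$ dividing $h$ (so $X$ is Gorenstein), that $X$ is terminal, and finally to compute $(-K_X)^n$.

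It is cleanest to regroup the weights. Put $c_j=h/(2s_j)$ and $b_j=h/s_j=2c_j$ for $1\le j\le k-1$, and $d=h/4$; then the weight multiset of $X$ is $\{1,1,1\}\cup\{c_1,\dots,c_{k-1}\}\cup\{b_1,\dots,b_{k-1}\}\cup\{d\}$, a total of $2k+2=n+1$ weights, with $a_n=b_1$, $a_{n-1}=d$, $a_{n-2}=c_1$, and $a_{2i-1}=c_{k+1-i}$, $a_{2i}=b_{k+1-i}$ for $2\le i\le k-1$. That the $a_i$ are positive integers follows from $s_k-1=s_0s_1\cdots s_{k-1}$ together with parity considerations, exactly as in the proof of Theorem~\ref{largestindex}; that each $a_i$ divides $h=2s_0\cdots s_{k-1}$ is clear (it is divisible by each of $2s_j$, $s_j$, $4$, $6$, $3$), so $K_X$ is Cartier and $X$ is Gorenstein. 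Well-formedness is automatic, since three of the weights equal $1$, so deleting any single weight leaves at least two $1$'s and hence forces the remaining gcd to be $1$. Finally, summing the weights via the Sylvester identity $\sum_{j=0}^{k-1}1/s_j=1-1/(s_k-1)$ yields $\sum_{j=1}^{k-1}c_j=\tfrac h4-1$ and $\sum_{j=1}^{k-1}b_j=\tfrac h2-2$ (using $h=2(s_k-1)$), whence $\sum a_i=3+(\tfrac h2-2)+(\tfrac h4-1)+\tfrac h4=h$ and $-K_X=\mathcal{O}(h)$.

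For terminality I would use Remark~\ref{open} to reduce to the $n+1$ coordinate points and apply Lemma~\ref{checkterminal} (the local quotients are well-formed for the same reason as above). At the three points with weight $1$ the cyclic quotient is trivial. For the other points the only numerical inputs needed are $b_m=2c_m$, the divisibility $c_m\mid\tfrac h2$ — indeed $\tfrac h2=s_0\cdots s_{k-1}=s_mc_m$ — and the fact that $s_m-1$ is even. I would then choose in Lemma~\ref{checkterminal}: at the point of $c_m$ ($1\le m\le k-1$), the subset $I=\{c_j:j\ne m\}\cup\{d\}\cup\{\text{one }1\}$, of weight-sum $\big(\sum_jc_j-c_m\big)+d+1=\tfrac h2-c_m=(s_m-1)c_m$, a multiple of $c_m$; at the point of $b_m$, the subset $I=\{b_j:j\ne m\}\cup\{c_m\}\cup\{\text{two }1\text{'s}\}$, of weight-sum $\big(\sum_jb_j-b_m\big)+c_m+2=\tfrac h2-c_m=(s_m-1)c_m=\tfrac{s_m-1}{2}\,b_m$, a multiple of $b_m$; and at the point of $d$, the subset $I=\{c_1,\dots,c_{k-1}\}\cup\{\text{one }1\}$, of weight-sum $(\tfrac h4-1)+1=\tfrac h4=d$. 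In each case $I$ contains a weight $1$, so $\gcd(\{a_k:k\in I\}\cup\{a_\ell\})=1$; and $I$ leaves at least one of the three $1$'s outside, supplying the index $i\notin I$ with $\gcd(a_i,a_\ell)=1$ demanded by the lemma. (The case $k=2$, where $X=\mathbb{P}^5(4,3,2,1,1,1)$ and the families $\{c_j:j\ge2\}$, $\{b_j:j\ge2\}$ are empty, should be checked separately, but the three prescriptions above still apply verbatim.)

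Finally $\vol(X)=(-K_X)^n=h^n\vol(\mathcal{O}(1))=h^n/(a_0\cdots a_n)$. Since $h=2s_0s_1\cdots s_{k-1}=12\prod_{j=2}^{k-1}s_j$, one has $\prod_{j=2}^{k-1}s_j=h/12$, so
\[
a_0\cdots a_n=\Big(\prod_{j=2}^{k-1}c_jb_j\Big)\,c_1\,d\,b_1=\frac{h^{2(k-2)}}{2^{k-2}\prod_{j=2}^{k-1}s_j^{2}}\cdot\frac{h^{3}}{72}=\frac{h^{2k-3}}{2^{k-3}},
\]
and therefore $\vol(X)=h^4\cdot2^{k-3}=2^4(s_k-1)^4\cdot2^{k-3}=2^{k+1}(s_k-1)^4=2^{(n+1)/2}(s_{(n-1)/2}-1)^4$. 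The one genuinely delicate point is the terminality check: everything else is a direct manipulation of the Sylvester recursion, and the terminality check itself becomes routine only after one spots the relation $b_j=2c_j$ together with $c_m\mid\tfrac h2$, which is precisely what makes the three subsets above work.
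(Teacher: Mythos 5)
Your proposal is correct, and its overall architecture is the same as the paper's proof of Theorem \ref{largevolume}: well-formedness from the three weights equal to $1$, the Sylvester identity to compute $a_0+\cdots+a_n=h$ (hence $-K_X=\mathcal{O}(h)$, Gorenstein since every weight divides $h$), reduction to the coordinate points via Remark \ref{open}, terminality via Lemma \ref{checkterminal}, and the volume as $h^n/(a_0\cdots a_n)$; your closed form $a_0\cdots a_n=h^{2k-3}/2^{k-3}$ reproduces the paper's value $2^{k+1}(s_k-1)^4$. Where you genuinely differ is in the choice of the subsets $I$ when applying Lemma \ref{checkterminal}. The paper takes, at the point with weight $a_{2i-1}$ (and at $a_{n-2}$), the single weight $a_{2i}=2a_{2i-1}$ (resp.\ $a_n=2a_{n-2}$), and at the points $a_{2i}$ and $a_{n-1}$ subsets whose members all share a common factor with the corresponding $r$; the sums are indeed multiples of $r$, but the hypothesis $\gcd(\{b_k\colon k\in I\}\cup\{r\})=1$ of Lemma \ref{checkterminal} is not literally satisfied by those choices, so as written those applications need the subsets to be adjusted. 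Your subsets always include one weight equal to $1$ while leaving another $1$ outside $I$, so both gcd hypotheses of the lemma hold verbatim at every coordinate point, and the identities $b_j=2c_j$, $s_mc_m=h/2$ and the parity of $s_m-1$ supply the required divisibility, including in the boundary case $k=2$ where $X=\mathbb{P}^5(4,3,2,1,1,1)$. In short, your argument is a tightened version of the paper's proof rather than a different method, and at the terminality step it is the more careful of the two.
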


\begin{proof}
That $a_0 = a_1=a_2=1$ implies $X$ is well-formed. The anti-canonical divisor $-K_X=\mathcal{O}(a_n+\cdots+a_0)=\mathcal{O}(h)$ since $a_n+\cdots+a_0$ equals
\begin{equation*}
\begin{split}
&h(\frac{1}{3}+\frac{1}{4}+\frac{1}{6}+\frac{1}{7}+\frac{1}{14}+\frac{1}{43}+\frac{1}{86}+\cdots+\frac{1}{s_{k-1}}+\frac{1}{2s_{k-1}})+3\\
&=h(\frac{1}{2}+\frac{1}{3}+\frac{1}{7}+\frac{1}{43}+\cdots+\frac{1}{s_{k-1}}-\frac{1}{2})+\frac{h}{2}(\frac{1}{2}+\frac{1}{3}+\frac{1}{7}+\cdots+\frac{1}{s_{k-1}})+3\\
&=h(1-\frac{1}{s_k-1}-\frac{1}{2})+\frac{h}{2}(1-\frac{1}{s_k-1})=h(1-\frac{3}{2(s_k-1)})+3=2(s_k-1).
\end{split}
\end{equation*}
Thus $X$ has Fano index $h=2(s_k-1)$ and is Gorenstein since $h$ is a multiple of every weight. 

By Remark\ref{open}, in order to show $X$ has terminal singularities, it suffices to show the quotient singularity $\frac{1}{a_j}(a_n,\ldots,\widehat{a_j},\ldots,a_0)$ is terminal for $3\leq j \leq n$.
If $j=2i-1$ for $ 2 \leq i\leq k-1$, we have $a_{j+1}=2a_j$. 
If $j=2i$ for $ 2 \leq i\leq k-1$, we have 
\begin{equation*}
\begin{split}
a_{2i-1}+a_{n-1}+a_{n}&=\frac{h}{2s_{k+1-i}}+\frac{h}{2\cdot 3}+\frac{h}{3}=h\frac{3+s_{k+1-i}+2s_{k+1-i}}{2\cdot 3 \cdot s_{k+1-i}}\\
&=\frac{1+s_{k+1-i}}{2}\frac{h}{s_{k+1-i}}=\frac{1+s_{k+1-i}}{2}a_{2i}.
\end{split}
\end{equation*}
Since $\gcd(a_0, a_j)=1$, we have $\frac{1}{a_j}(a_n,\ldots,\widehat{a_j},\ldots,a_0)$ is terminal by Lemma \ref{checkterminal} for $j=2i-1$ and $j=2i$, where $ 2 \leq i\leq k-1$. 
Similarly, since $a_n=2a_{n-2}$ and $\gcd(a_0, a_{n-2})=1$, we have $\frac{1}{a_{n-2}}(a_n,a_{n-1},\widehat{a_{n-2}},a_{n-3},\ldots,a_0)$ is terminal.

We have $\frac{1}{a_{n-1}}(a_n,\widehat{a_{n-1}},a_{n-2},\ldots,a_0)$ is terminal by Lemma \ref{checkterminal}.
Indeed, we have $a_{n-2}+a_{n}=\frac{h}{6}+\frac{h}{3}=\frac{h}{2}=2a_{n-1}$ and $\gcd(a_0, a_{n-1})=1$.

We also have $\frac{1}{a_n}(\widehat{a_n},a_{n-1},a_{n-2},\ldots,a_0)$ is terminal by Lemma \ref{checkterminal}.
Indeed, we have 
\begin{equation*}
\begin{split}
\sum_{i=2}^{k-1}a_{2i-1}+a_{n-1}+a_1&=\frac{h}{2}(\frac{1}{s_{k-1}+\cdots+\frac{1}{3}}+\frac{1}{2})-\frac{h}{2\cdot3}+1\\
&=\frac{h}{2}(1-\frac{1}{s_k-1})-\frac{h}{2\cdot3}+1=\frac{h}{3}=a_n
\end{split}
\end{equation*}
and $\gcd(a_0, a_n)=1$.

Now we compute the volume $(-K_X)^n$, which equals
\begin{equation*}
\begin{split}
&\frac{h^n}{\frac{h}{3}\cdot\frac{h}{4}\cdot\frac{h}{6}\cdot\frac{h}{7}\cdot\frac{h}{14}\cdot\frac{h}{43}\cdot\frac{h}{86}\cdots\frac{h}{s_{k-1}}\cdot\frac{h}{2s_{k-1}}}\\
&=\frac{h^n}{\frac{2h^k}{2\cdot 3\cdot 7\cdot43\cdot s_{k-1}}\cdot(\frac{h}{2})^k\cdot\frac{1}{2\cdot 3\cdot 7\cdot43\cdot s_{k-1}}}\\
&=h^2\cdot 2^{k-1}(s_k-1)^2=2^{k+1}(s_k-1)^4=2^{\frac{n+1}{2}}(s_{\frac{n-1}{2}}-1)^4.
\end{split}
\end{equation*}
\end{proof}
For $n=5,7,9$, Theorem \ref{largevolume} gives weight projective spaces 
\begin{equation*}
\begin{split}
&\mathbb{P}^5(4,3,2,1,1,1) \text{ with volume } 10368,\\
&\mathbb{P}^7(28,21,14,12,6,1,1,1) \text{ with volume } 49787136,\\
&\mathbb{P}^9(1204,903,602,516,258,84,42,1,1,1) \text{ with volume } 340424620687872
\end{split}
\end{equation*}
respectively. They have largest volume among all Gorenstein terminal weighted projective spaces in dimension $n$ \cite[Table 5.]{Kasprzyk2013}. The results motivate us to conjeture:

\begin{conjecture}
For each odd integer $n\geq 5$, let $X$ be the $n-$fold in Theorem \ref{largevolume}. Then the volume $(-K_X)^n=2^{\frac{n+1}{2}}(s_{\frac{n-1}{2}}-1)^4$ of $X$ is the largest possible volume among all Gorenstein terminal Fano $n-$folds.
\end{conjecture}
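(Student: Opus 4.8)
The plan is to split the problem into a purely combinatorial maximization over toric varieties and a geometric reduction of the general case to the toric one, following the pattern by which the analogous canonical statements of Nill \cite{Nill2007}, Balletti--Kasprzyk--Nill \cite{BKN} and Kasprzyk \cite{Kasprzyk2013} were first established combinatorially, and then, in dimension three, extended to \emph{all} Fano varieties by Prokhorov \cite{Prokhorov2005}. First I would prove the bound for Gorenstein terminal toric Fano $n$-folds. Such a variety corresponds to a terminal reflexive polytope $P$ in $N_{\mathbb R}$ (reflexive because $-K_X$ is Cartier and ample, terminal because the only lattice points of $P$ are the interior point $0$ and its vertices), and its volume is the normalized volume $n!\,\vol(P^*)$ of the dual polytope. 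For the weighted projective sub-case $X=\mathbb{P}^n(a_0,\dots,a_n)$ the constraints become transparent: writing $q_i=h/a_i$ with $h=\sum_i a_i$, the Gorenstein condition says each $q_i$ is a positive integer with $\sum_i 1/q_i=1$, and one computes $(-K_X)^n=\prod_i q_i / h$. Thus maximizing the volume over weighted projective spaces is an Egyptian-fraction extremal problem, constrained further by the Reid--Tai inequalities of Theorem \ref{RT} that encode terminality.

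The crux of this first part is to show that the constrained extremal problem is solved by the Sylvester configuration of Theorem \ref{largevolume}. The appearance of Sylvester's sequence is precisely the classical phenomenon that, among unit-fraction decompositions of a fixed rational number, the greedy (Sylvester) truncation makes the relevant product extremal; this is the mechanism behind the sharp canonical bound $2(s_n-1)^2$ in \cite{BKN}. I would adapt the barycentric / unit-fraction argument of \cite{BKN} to the terminal setting. The essential new feature is that terminality --- via the Reid--Tai criterion, equivalently via the ``no non-vertex lattice point'' condition on $P$ --- forbids many decompositions allowed in the canonical case and forces the factor-of-two structure $\tfrac{h}{3},\tfrac{h}{4},\tfrac{h}{6},\tfrac{h}{7},\tfrac{h}{14},\dots$ of the construction, which is why the extremal value carries the factor $2^{(n+1)/2}$ and why a clean optimum exists only in odd dimension. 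Concretely I would (i) reduce to the Picard-number-one (weighted projective) case as in \cite{Nill2007}; (ii) set up the maximization of $\prod_i q_i/h$ subject to $\sum_i 1/q_i=1$ together with the terminality constraints; and (iii) prove a Sylvester-extremality lemma forcing the optimizer to be the listed weights.

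The geometric reduction is where the main obstacle lies. Boundedness of terminal Fano $n$-folds (Birkar's theorem on the Borisov--Alexeev--Borisov conjecture) guarantees that a maximal volume exists, so the assertion is at least well posed. To pin the maximum to the toric locus I would attempt to degenerate an arbitrary Gorenstein terminal Fano $X$ of maximal volume to a toric variety while controlling both the singularities and the anticanonical volume. In dimension three Prokhorov \cite{Prokhorov2005} achieved this by a hands-on analysis of extremal contractions and of the del Pezzo and conic-bundle structures, but no such classification is available in higher dimension. A conceptual substitute --- a principle that a volume-maximizer within the Gorenstein terminal class must be toric, or that a suitable toric degeneration does not decrease the anticanonical volume --- is not known and appears genuinely hard; this is exactly why the statement remains a conjecture. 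I therefore expect this reduction of the general maximization to the toric combinatorial one to be the decisive difficulty, with the first part reducible to a sharp but tractable Sylvester-type extremality argument modeled on \cite{BKN,Kasprzyk2013}.
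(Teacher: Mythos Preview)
The paper does not prove this statement at all: it is stated as a conjecture, and the only evidence offered is the computational verification (from Kasprzyk's tables) that the construction of Theorem~\ref{largevolume} attains the maximum among Gorenstein terminal weighted projective spaces in dimensions $5$, $7$, and $9$. So there is no proof to compare against, and your proposal should be read as an outline of how one might attack an open problem, not as a candidate proof.

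As an outline it is reasonable, but both halves contain genuine gaps. In the combinatorial half you set up the problem correctly --- with $q_i=h/a_i$ one has $\sum_i 1/q_i=1$ and $(-K_X)^n=\prod_i q_i/h$ --- but you do not actually carry out the adaptation of the Balletti--Kasprzyk--Nill argument to the terminal setting. That adaptation is not routine: the terminal Reid--Tai inequalities interact with the Egyptian-fraction optimization in a way that is not covered by the canonical argument, and the ``paired'' structure $h/s_j,\,h/(2s_j)$ that produces the factor $2^{(n+1)/2}$ is asserted rather than derived. Indeed, even the restricted claim that the example is volume-maximal among Gorenstein terminal \emph{weighted projective spaces} of the given odd dimension is, as far as the paper records, only known computationally for $n\le 9$; your step (iii), the ``Sylvester-extremality lemma forcing the optimizer to be the listed weights,'' is precisely the missing content. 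The reduction from general toric Fano varieties to the Picard-number-one case that you invoke from \cite{Nill2007} also does not transfer automatically, since Nill's argument uses reflexivity in a way tied to the canonical (not terminal) hypothesis.

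For the geometric half you yourself identify the obstacle correctly: no mechanism is known that forces a volume-maximizing Gorenstein terminal Fano variety to be toric, or that produces a toric degeneration preserving both the Gorenstein terminal condition and the anticanonical volume, beyond Prokhorov's hands-on three-dimensional work. So your proposal is an honest sketch of a plausible strategy, but neither step is close to complete, and the paper makes no claim otherwise.
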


\end{document}